\def\@settitle{\begin{center}%
		\baselineskip14\p@\relax
		\normalfont\LARGE\scshape\bfseries
		\@title
	\end{center}%
}
\def\subsection{\@startsection{subsection}{2}%
	\z@{.5\linespacing\@plus.7\linespacing}{.5\linespacing}%
	{\normalfont\large\bfseries}}
\def\subsubsection{\@startsection{subsubsection}{3}%
	\z@{.5\linespacing\@plus.7\linespacing}{.5\linespacing}%
	{\normalfont\itshape}}
\definecolor{darkblue}{rgb}{0.0, 0.0, 0.45}
\date{\today}
\theoremstyle{theorem}
\newtheorem{Thm}{Theorem}[section]
\newtheorem{Cor}[Thm]{Corollary}
\newtheorem{Def}[Thm]{Definition}
\newtheorem{Rem}[Thm]{Remark}
\theoremstyle{remark}
\definecolor{green}{rgb}{0,.6,0}
\newcommand{\R}{\mathbb{R}}
\newcommand{\Let}{\coloneqq}
\newcommand{\RNum}[1]{\uppercase\expandafter{\romannumeral #1\relax}}
\newcommand{\basis}{F_{\mathrm b}}
\title[Anomaly Detection with High-fidelity Simulators]
{Dynamic Anomaly Detection with High-fidelity  
\\Simulators: A Convex Optimization Approach}
\author{Kaikai~Pan,
	Peter~Palensky,
	and~Peyman~Mohajerin~Esfahani}%
	\thanks{The authors are with the Delft University of Technology, The Netherlands (email: \tt{\{K.Pan, P.Palensky,  P.MohajerinEsfahani\}}@tudelft.nl).}
\begin{document} 
\maketitle

\begin{abstract}

The main objective of this article is to develop scalable dynamic anomaly detectors 
when high-fidelity simulators of power systems are at our disposal. On the one hand, mathematical models 
of these high-fidelity simulators are typically 
``intractable'' 
to apply existing model-based approaches. 
On the other hand, pure data-driven methods developed primarily in the machine learning literature neglect our knowledge about the underlying dynamics of 
the systems. 
In this study, 
we combine tools from these two mainstream approaches to develop a 
diagnosis filter 
that utilizes the knowledge of both the dynamical system as well as the simulation data of the high-fidelity simulators. 
The proposed diagnosis filter aims to achieve two desired features: (i) performance robustness with respect to 
model mismatch; (ii) high scalability. To this end, we propose a tractable (convex) optimization-based reformulation in which decisions are the filter parameters, the model-based information introduces feasible sets, and the data from the simulator forms the objective function to-be-minimized regarding the effect of 
model mismatch on the filter performance. To validate the theoretical results and its effectiveness, 
we implement the developed diagnosis filter in DIgSILENT PowerFactory to detect false data injection attacks on the Automatic Generation Control measurements in the three-area IEEE 39-bus system. 	

\end{abstract}



\section{Introduction} 
\label{sec:intro}

The principle of anomaly detection in power system cyber security is to generate a diagnostic signal (e.g., residual) which keeps sensitive to malicious intrusions and simultaneously robust against other unknowns, given the available data from system outputs. The detection methods can be mainly classified into two categories: (i) the approaches that exploit an explicit mathematical model of the system dynamics (referred to {\em model-based} methods in this article); (ii) the {\em data-driven} approaches that try to automatically learn the system characteristics from the output data \cite{Ozay2016, Wei2018}. Our work in \cite{Pan2020} has developed a scalable diagnosis tool 
to detect the class of multivariate false data injection (FDI) attacks that may remain stealthy in view of a static detector, by capturing the dynamics signatures of such a disruptive intrusion. This method is, indeed, model-based that the dynamics of the system trajectories under multivariate FDI attacks are described via an explicit mathematical model representation (i.e., linear differential-algebraic equations (DAEs)). The numerical results in \cite{Pan2020} have proven its effectiveness in the given 
linear mathematical model. Now here comes another question: 

\begin{flushleft}
	\centering
	{\em Can the power of scalable model-based diagnosis tools be still utilized in real-world applications such as electric power systems for which there are reliable datasets from high-fidelity, but complex, simulators?} 
\end{flushleft}%

Let us clarify the terminologies 
adopted throughout this article. 
An abstract model refers to an explicit, but perhaps reduced-order, mathematical description of power system dynamics, e.g., 
a linear DAE. 
The datasets are the simulation results from a high-fidelity simulator like DIgSILENT PowerFactory (PF). A simulator is said to be high-fidelity 
when it is the closest to the reality and may consist of several complex nonlinear DAEs as parts. However, unfortunately, one may not have access to 
the mathematical description of such simulator. We aim to address the question above by designing a scalable diagnosis tool 
that can be applied to high-fidelity simulators. To do that, 
we need 
two important pieces of information: the knowledge of 
an abstract model, and 
the 
system trajectories 
provided by a high-fidelity simulator. 
The model-based knowledge 
is utilized for a scalable design such that the design parameters (e.g., the order of the diagnosis tool) should be adjustable depending on the size or degree of the studied system. 
The source of challenge to answer the question comes from the following aspect: 
whatever 
abstract linear model we can pick, {\em 
	model mismatch} 
is always 
reflected through the difference of the output of the abstract model and the one from the high-fidelity simulator. It can be expected that this unknown of 
model mismatch 
potentially affects the diagnostic performance. 
With that in mind, we propose a diagnosis tool that is 
robust with respect to 
model mismatch, by exploiting the information that is revealed to us through 
the simulation data. 
We will provide further details toward this objective in the following Section~\ref{sec:outline_solution}.

\paragraph*{\bf Literature on model-based and data-driven anomaly detection} 
Let us briefly overview the advantages and limitations of the pure model-based and data-driven approaches. 
The model-based methods require 
detailed information of the system dynamics. 
A major subclass of these schemes is the observer-based residual generator 
that historically emerges from a control-theoretic 
perspective and has been extended to 
linear DAEs by \cite{Nyberg2006}. 
To our best of knowledge, the 
study \cite{Taha2016} is the first attempt to apply observer-based detectors to power system cyber security and monitoring problem. Recently, a variant of observer-based method is employed in \cite{Ameli2018a} so as to deal with unknown exogenous inputs in the linear Automatic Generation Control (AGC) system. Parameter estimation model-based approaches have also been extensively investigated. For instance, the extended Kalman filter algorithm is used to perform such an estimation for anomaly detection \cite{Khalaf2019}. In \cite{Qi2018}, a comparison study is carried out for various Kalman filters and observers in power system dynamic state estimation with model uncertainty and malicious cyber attacks. The residual generators above usually have the same degree as the system dynamics, which can be problematic in the online implementation particularly for large-scale power systems \cite{Ding2008}. Our diagnosis filter in \cite{Pan2020} provides a good alternative to detect multivariate FDI attacks in a real-time operation. Still, the challenge remains as the power system models are mostly nonlinear, complex and high-dimensional. 
The work in \cite{Esfahani2016} proposed an optimization-based filter for detecting a single anomaly in the control system where the nonlinearity can be fully described in DAEs. However, as noted earlier, 
having a 
detailed mathematical description of the model especially in the high-fidelity simulator is usually infeasible. 

Another major technique for anomaly detection comes from data-driven approaches which do not require an explicit mathematical model of system dynamics. Developments such as sensing technology, Internet-of-Things and Artificial Intelligence have contributed to a more data-driven power system \cite{Palensky2017a}. Anomaly detection is mainly considered as a classification problem and there are supervised or unsupervised learning approaches for that purpose. Among all the supervised classifications, deep neural networks (DNN) \cite{Yu2018, Ayad2018}, bayesian networks \cite{Wadhawan2018} and Kernel machines \cite{Ozay2016} are the popular methods. For unsupervised classifications for detecting cyber attacks in smart grids, one can find principle component analysis (PCA) and its extension 
\cite{Hao2015}, autoencoders \cite{Ahmed2019}, etc. In addition to supervised or unsupervised approaches, recent work in \cite{Kurt2019} has proposed a reinforcement learning based algorithm for online attack detection in smart grids without a prior knowledge of system models or attack types. Overall, data-driven methods are suitable for real implementations in complex and large-scale power systems. However, their performance highly depends on the quantity and quality of the accessible data, and thus can be intractable in many cases \cite{Tidriri2016}. Besides, the required pre-processing stage (e.g., data training) may have a high computational cost.

\paragraph*{\bf Contributions and outline} 
This article aims to develop a scalable and robust diagnosis filter that can be applied to a high-fidelity simulator like PF. To achieve that, we propose a tractable optimization-based reformulation where the abstract model-based information introduces feasible sets, and the simulation data forms the objective function to minimize the effect of 
model mismatch on the filter 
residual. 
In this way, the diagnosis filter can be ``trained'' in the normal operations (without attacks) to have performance robustness with respect to 
model mismatch. Then the diagnosis filter can be ``tested'' in PF to detect attacks. Our main contributions are: %

\begin{itemize}
	\item[(i)] Firstly, we develop a data-assisted model-based approach that utilizes both the model-based knowledge and also the simulation data from the simulator, for a scalable and robust design (Definition~\ref{def:robust_detect_mis} and the program~\eqref{opt:robust-poly-mis}). Instead of using any existing (un)supervised learning algorithms, we propose our own optimization-based characterization to ``train'' the filter 
	under 
	multiple 
	mismatch signatures 
	obtained through 
	the simulation data (Remark~\ref{rem:robust_filter_univ}). 
	As far as we know, this is the first study that builds on such a perspective. In the optimization-based reformulation, the objective is to minimize the effect of model mismatch on the filter residual. 
	We show that, the resulted optimization programs are convex and hence tractable, indicating that the proposed filter is not computational expensive compared to many machine-learning type methods. 
	
	\item[(ii)] We investigate optimization-based characterizations of the developed diagnosis filter in both scenarios of univariate and multivariate attacks. 
	A square of $\mathcal{L}_{2}$-inner product with corresponding norm is proposed to quantify the effect of 
	model mismatch. Then the $\mathcal{L}_{2}$-norm of the residual part introduced by 
	model mismatch is reformulated as a quadratic function. We prove that, the characterization of the filter under a univariate attack becomes a family of convex quadratic programs (QPs), and the 
	obtained filter can even have the capability of tracking the attack magnitude through its non-zero steady-state residual (Theorem~\ref{thm:qp_univ}). We also extend to 
	the scenario of multivariate attack, and it appears that a standard QP can be derived. Besides, we provide conditions under which the filter can detect all the plausible multivariate attacks in an admissible set (Corollary~\ref{cor:qua_opt_transient}). 
	
\end{itemize}
The process of diagnosis filter construction and validation is concluded in Algorithm~\ref{alg:filterconstr_algorithm}. 
The effectiveness of the proposed 
approach is validated on the three-area IEEE 39-bus system. Numerical results from the case study illustrate that the developed data-assisted model-based filter 
for the PF simulator can successfully generate alerts in the presence of FDI attacks, while a pure model-based detector may fail. 

Section~\ref{sec:outline_solution} shows the outline of our proposed solution. Both the motivating case study and the mathematical framework of our solution are detailed presented. Section~\ref{sec:novel_detect} proposes the tractable optimization-based characterization of the developed diagnosis filter which has high scalability and performance robustness to 
model mismatch. Numerical results of the developed filter comparing with other pure model-based ones are reported in Section~\ref{sec:results}, and conclusion is in Section~\ref{sec:conclusion}.  


\section{Outline of the Proposed Solution} \label{sec:outline_solution}

In this section, we starts from a case study motivating the setting of our study. 
Then the mathematical framework of our proposed solution to design a diagnosis filter that can be applied to the high-fidelity simulator is presented. %

\begin{figure}[t!p]
	\centering
	\includegraphics[scale=0.0387]{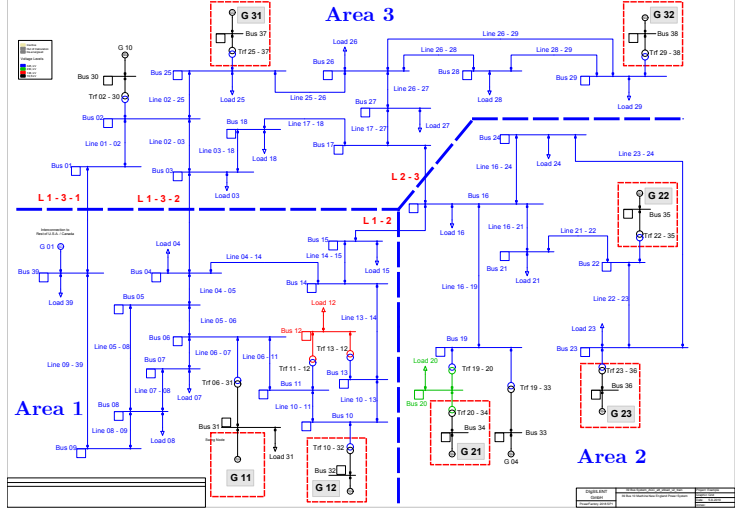}
	\caption{Three-area power system with AGC functions 
		in high-fidelity simulator PF.}
	\label{fig:39bus_pf}
\end{figure}

\subsection{Motivating case study} \label{subsec:motivation}%

Let us consider a three-area power system equipped with AGC functions in Figure~\ref{fig:39bus_pf}. The three-area IEEE 39-bus system is modeled in the PF simulator. In PF, the dynamic generator model would consist of a synchronous machine, along with a voltage regulator for the exciter, and also the turbine-governor unit. We also implement AGC in each area for secondary frequency control. The AGC is a typical automatic control loop that regulates the system frequency and the power exchanges between areas by controlling the power settings of the generators participating in AGC to follow load changes. 
Looking into the behavior of the electromechanical dynamics in PF, we can describe the formal mathematical model of the studied system by a set of DAE equations as
%
%
\begin{subequations}\label{eq:nonDAEmodel}
	\begin{align}
	& \dot{x}(t) = \mathsf{f}(x(t), \, a(t), \, f(t)) \, , \label{eqsub:dotx_f}\\
	& 0 = \mathsf{g}(x(t) \, , a(t) \, , d(t)), \label{eqsub:g} 
	\end{align}
\end{subequations}
where $x \in \mathbb{R}^{n_{x}}$ is 
the vector of augmented state variables including the ones of synchronous machines (e.g., damper-windings, mechanical equations of motion, exciter,
voltage regulator, turbine-governor unit) and AGC controllers \cite{Chakrabortty2011}. The vector $a \in \mathbb{R}^{n_{a}}$ represents the 
algebraic variables, and the vector $d \in \mathbb{R}^{n_{d}}$ denotes the 
load disturbances. 
We note that \eqref{eqsub:dotx_f} consists of both synchronous generator dynamics and AGC controller dynamics. 
The AGC controller collects the information of grid frequency and power exchange on the tie-line (e.g., L1-2 in Figure~\ref{fig:39bus_pf}) that connects areas to form the area control error (ACE) to be minimized. Then the power settings are computed for the generation allocation logic of the generators participating in AGC. In practice, the data to form the ACE signal are usually transmitted through unprotected channels \cite{Ten2008,Pan2017a}. 
Thus, we consider an anomaly scenario where FDI attacks are corrupting the ACE; this anomaly of FDI attack is characterized by the vector $f \in \mathbb{R}^{n_{f}}$ in \eqref{eqsub:dotx_f}.

In the nonlinear DAE formulation of \eqref{eq:nonDAEmodel}, the function $\mathsf{f}: \mathbb{R}^{n_{x} + n_{a} + n_{f}} \rightarrow \mathbb{R}^{n_{x}}$ in \eqref{eqsub:dotx_f} captures the dynamics of synchronous generators and 
AGC controllers. 
The function $\mathsf{g}: \mathbb{R}^{n_{x} + n_{a} + n_{d}} \rightarrow \mathbb{R}^{n_{a}}$ in \eqref{eqsub:g} models the electrical network. These nonlinear functions involve saturation, sinusoidal terms and possibly other nonlinearity in the power system. Since \eqref{eq:nonDAEmodel} is for a model description in the high-fidelity PF simulator, one may not have access to the detailed DAE of \eqref{eq:nonDAEmodel}. In what follows, we show that, to design a diagnosis filter for the high-fidelity PF simulator, we do not need an explicit model of \eqref{eq:nonDAEmodel}, however, it is this simulator that gives us the trajectory of the system and the output from the simulations to be fitted into the diagnosis filter for anomaly detection.

Firstly, if the model in \eqref{eq:nonDAEmodel} is known, a common practice for the analysis is to linearize \eqref{eq:nonDAEmodel} under an assumption that the system works closely around the nominal operating point; one can find such treatments in many literature sources like \cite{Chakrabortty2011,Zhang2014}. Next, though \eqref{eq:nonDAEmodel} in PF is unknown to the diagnosis filter design, we can pick an 
abstract linear model whose parameters are obtained from a simplified version of the system. 
If the mathematical description of the complex DAE model \eqref{eq:nonDAEmodel} would be available, then a reasonable choice of this abstract linear model could be the linearized DAE around the operation point. However, we emphasize that this choice does not need to be a linearized version of the DAE, particularly in view of the robustification technique introduced in the next step. Such an abstract linear model can be described by

\begin{equation}\label{eq:linearSP}
\begin{aligned}
& \dot{\tilde{x}}(t) = {A}_{c} \tilde{x}(t) + {B}_{c,d}{d}(t) + {B}_{c,f}{f}(t), \\
& {y}(t) = {C}\tilde{x}(t) + {D}_{f}{f}(t),\\
\end{aligned}
\end{equation}
where $\tilde{x} \in \mathbb{R}^{n_{\tilde{x}}}$ is the reduced-order state vector of the abstract model 
including the dynamics of the multi-area system and also AGC. The vector $y \in \mathbb{R}^{n_{{y}}}$ denotes the system output of \eqref{eq:linearSP}. The model \eqref{eq:linearSP} is picked because, as noted by \cite{kundur1994power}, in AGC we pay more attention on collective performance of all generators, and hence each area is represented by a simplified model comprised of equivalent turbine-governor units and generators. Besides, from the timescales of interest, the frequency response of AGC can be decoupled from the loop of voltage regulator in exciter. Of course, the model \eqref{eq:linearSP} is not meant to be ``low-fidelity'', instead, it can be sufficiently accurate for analytical analysis according to \cite{Pan2020,Ameli2018a}.

We refer to the 
Appendix A for further details of system modeling in PF and the implementation of AGC 
under FDI attacks, and also the parameters in the abstract 
model \eqref{eq:linearSP}. As mentioned in Section~\ref{sec:intro}, the challenge is, 
regardless of the choice of the abstract model \eqref{eq:linearSP}, there would always exist 
{\em model mismatch} between \eqref{eq:nonDAEmodel} and \eqref{eq:linearSP}. 
For instance, we notice that the assumption of operating point 
can be easily violated due to switches and natural disturbances. 

\subsection{Mathematical framework of the proposed solution} \label{subsec:math_frame}

\begin{figure}[t!p]
	\centering
	\includegraphics[scale=1.05]{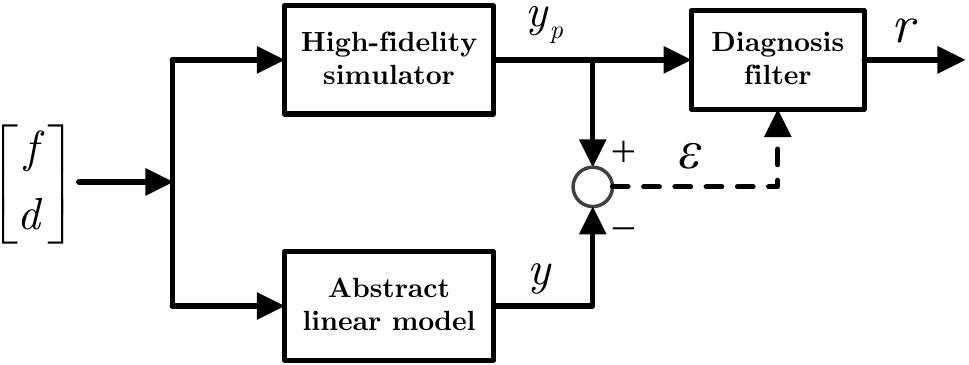}
	\caption{Configuration of the proposed solution. } 
	\label{fig:outline}
\end{figure}
%

To alleviate the impact of model mismatch, 
we propose the solution outlined in Figure~\ref{fig:outline}. We denote the output from the PF simulator as ${y}_{p}$. Our proposed solution builds on a new 
perspective that the diagnosis filter utilizes not only the abstract model-based information for a scalable design but also the simulation data to ``train'' the filter to achieve performance robustness with respect to model mismatch. 
Note that model mismatch is reflected through the difference of the output ${y}_{p}$ from PF and ${y}$ from the abstract model \eqref{eq:linearSP}; such output mismatch signatures are
characterized by $\varepsilon$ in Figure~\ref{fig:outline}. 

Let us start with 
the first approach that builds on 
model information. 
In a realistic framework, the output data are applied to the diagnosis filter in discrete-time samples. Thus firstly, we would like to express the discrete-time version of \eqref{eq:linearSP} as a more compact linear DAE (here it refers to difference algebraic equation) formulation,
\begin{equation}\label{eq:linearDAE}
{H}(q)\bar{x}[k] + {L}(q){y}[k] + {F}(q){f}[k] = 0, \ \forall k \in \mathbb{N},
\end{equation}
where $q$ is a 
time-shift operator such that $q\tilde{x}[k] \rightarrow \tilde{x}[k+1]$. The augmented vector $\bar{x}:= [\tilde{x}^\top \ {d}^\top]^{\top}$ 
consists of both the state variables and the load disturbances. We denote $n_{r}$ as the number of rows in \eqref{eq:linearDAE}. Then ${H}, \ {L}, \ {F}$ are polynomial matrices in terms of the operator $q$ with $n_{r}$ rows and $n_{x}, n_{y}, n_{f}$ columns, respectively, by defining,
\begin{equation}\label{eq:dae_def}
{H}(q) := \left[\begin{matrix} -q{I} + {A} & {B}_{d}\\ {C} & {0}\\ \end{matrix}\right], \, {L}(q) := \left[\begin{matrix} {0} \\ -{I} \\ \end{matrix}\right], \, {F}(q) := \left[\begin{matrix} {B}_{f}\\ {D}_{f} \\ \end{matrix}\right]. \nonumber 
\end{equation}
where $A$, $B_{d}$ and $B_{f}$ are 
from a zero-order hold (ZOH) discretization of \eqref{eq:linearSP} for a sampling time $T_{s}$, according to \cite[Chapter 5-5, page 314]{Ogata1995}.

Next, for the 
filter design in Figure~\ref{fig:outline}, the output ${y}_{p}$ from the 
PF simulator is available as the input to the diagnosis filter. In this article, we propose a 
diagnosis filter as a type of residual generator which has a linear transfer operation, 
%
%
\begin{align}
& r[k] := {R}_{\varepsilon}(q){y}_{p}[k], \label{eq:filter_rg}\\
& \varepsilon[k] = {y}_{p}[k] - {y}[k], \label{eq:output_mismatch}
\end{align}
where $r$ is the residual signal for diagnosis in Figure~\ref{fig:outline}, ${R}_{\varepsilon}(q)$ with a predefined degree is the design variable of the diagnosis filter, which will depend on the 
information of the abstract model in the preceding subsection and also the simulation data 
through the output mismatch signature $\varepsilon$ in \eqref{eq:output_mismatch}.

\section{Optimization-based Characterization of the Diagnosis Filter} 
\label{sec:novel_detect}

\subsection{Robust diagnosis filter}\label{subsec:robust_detect}%

Considering the model-based information in the compact formulation of \eqref{eq:linearDAE}, the residual generator can be represented through the polynomial matrix equations. Thus for 
${R}_{\varepsilon}(q)$, we introduce ${R}_{\varepsilon}(q) := {a}(q)^{-1}{N}(q){L}(q)$. Then ${N}(q)$ with the dimension of $n_{r}$ and a predefined degree $d_{N}$ becomes the filter design variable, if the scalar polynomial ${a}(q)$ with sufficient order to make ${R}_{\varepsilon}(q)$ physically realizable is determined. Note that $d_{N}$ is an adjustable variable to be much less than the order of system dynamics in \eqref{eq:linearSP}. From \eqref{eq:linearDAE} to 
\eqref{eq:output_mismatch}, we can have %
\begin{align}\label{eq:residual_gen}
r[k] &= a(q)^{-1}{N}(q){L}(q){y}_{p}[k] \nonumber\\
& = a(q)^{-1}{N}(q){L}(q)({y}[k]+\varepsilon[k])   \nonumber\\
&= -\underbrace{a(q)^{-1}{N}(q){H}(q)\bar{x}[k]}_{\text{(\RNum{1}})} - \underbrace{a(q)^{-1}{N}(q){F}(q){f}[k]}_{\text{(\RNum{2}})} \nonumber\\ 
& \ \ \ +\underbrace{a(q)^{-1}{N}(q){L}(q)\varepsilon[k]}_{\text{(\RNum{3}})} \, ,
\end{align}
where term (II) is the desired contribution from the anomaly ${f}$. Ideally, we would like to let the residual keep robust against the unknowns of term (I) and (III). For that purpose, first we need to quantify the effect of output mismatch $\varepsilon$ on the filter residual. For all $k \in \mathbb{N}$, let us define  
\begin{align} \label{eq:residual_e}
r_{\varepsilon}[k] := a(q)^{-1}{N}(q){L}(q)\varepsilon[k] \, . %
\end{align}
Next, let us further denote the space of a discrete-time signal taking values in $\mathbb{R}^{n}$ over the horizon of $T$ (i.e., $k \in \{1, \, \cdots, \, T \}$) 
by $\mathcal{{W}}_{T}^{n}$. 
We equip this space with an inner product and a corresponding norm as
\begin{equation}\label{eq:r_e_norm}
\| {v} \|_{\mathcal{L}_{2}}^{2} := \big\langle {v}, \ {v} \big\rangle, \quad \big\langle {v}, \ {w} \big\rangle: = \sum\limits_{k=1}^{T}{v}^{\top}[k]{w}[k], %
\end{equation}
where ${v}, \, {w}$ are some elements in the space $\mathcal{{W}}_{T}^{n}$. 

The main objective of applying a residual generator in PF is to make the residual $r$ as sensitive to anomaly $f$ as possible and simultaneously as robust as possible against other unknowns in term (I) and (III). To achieve that, we introduce a scalable and robust diagnosis filter characterized by a class of residual generator which has the following features.   
\begin{Def}[Robust diagnosis filter]\label{def:robust_detect_mis}
	Consider the residual generator represented via a polynomial vector~${N}(q)$ for a given $a(q)$. This residual generator is robust with respect to 
	output mismatch and can detect all the plausible disruptive attacks, %
	if ${N}(q)$ is the optimal solution from 
	\begin{align}\label{opt:robust-poly-mis}
	\min\limits_{{N}(q)}\quad & \| r_{\varepsilon} \|_{\mathcal{L}_{2}}^{2} 
	\nonumber \\
	\mbox{s.t.}\quad & N(q)H(q) = 0 \, , \\
	&  N(q)F(q)f \neq 0, \ \,  \forall \, f \in \mathcal{F}, \nonumber
	\end{align}
	where $\mathcal{F}$ in term (II) is an admissible anomaly set.
	We emphasize that $\mathcal{F}$ can be adjusted according to different anomaly scenarios where the convexity of the set is particularly desired from a computational perspective. For instance, in the scenario of disruptive univariate attack ($n_f = 1$), one can let $\mathcal{F}$ be a set of $\{f \in \mathbb{R} : \ f_{min} \leq f \leq f_{max} \}$ where $f_{min}, f_{max} \in \mathbb{R}$ are non-zero variables decided by the attack targets on attack impact and undetectability. Similarly, for multivariate attacks ($n_f > 1$), we can introduce a set  
	\begin{align*}
	\mathcal{F} = \big\{f \in \mathbb{R}^{n_{f}} : f = F_{\mathrm b}^{\top} \alpha, \ 
	\alpha \in \mathcal{A} \big\} \, ,
	\end{align*}
	where $F_{b} \Let [f_{1}, \, f_{2}, \, \dots, \, f_{d}]$ represents a finite basis for the set of disruptive attacks and $\alpha \Let [\alpha_{1}, \, \alpha_{2}, \, \cdots, \, \alpha_{d}]^\top \in \mathbb{R}^d$ contains the coefficients. $\mathcal{A} : = \{ \alpha \in \mathbb{R}^{d} \ | \ A\alpha \geq b  \}$ with 
	$A \in \mathbb{R}^{n_{b}\times d}$ and $b \in \mathbb{R}^{n_{b}}$ 
	is a polytopic set to reflect attack targets on attack impact and undetectability. 
\end{Def}

\subsection{Tractable optimization-based characterization under univariate attack} \label{subsec:detect_univ}

In light of \eqref{opt:robust-poly-mis} in Definition~\ref{def:robust_detect_mis} for the robust filter design, let us first consider the univariate attack scenario. Note that when there is no attack, the system output $y_{p}$ from the PF simulator and $y$ from the abstract model \eqref{eq:linearSP} only depend on the input of load disturbances $d$. Thus for one instance of load disturbances, $d_{i}$, one can have a specific output mismatch signature $\varepsilon_{i}$ according to \eqref{eq:output_mismatch}. For each $\varepsilon_{i} \in \mathcal{W}_{T}^{n_{y}}$, a {\em mismatch signature matrix} $\mathcal{E}_{i} \in \R^{n_{y} \times T}$ can be introduced, %
\begin{equation}\label{eq:mismatch_sig}
\mathcal{E}_{i} := \Big[\varepsilon_{i}[1], \ \varepsilon_{i}[2], \ \cdots, \ \varepsilon_{i}[T] \Big] \, .
\end{equation}
%
%

Recall that the operator $q$ acts as a time-shift operator: $q \, \varepsilon_{i}[k] \rightarrow \varepsilon_{i}[k+1]$. 
This operator is linear, and it can be translated as a matrix left-shift operator for matrix $\mathcal{E}_{i}$: $q \, \mathcal{E}_{i} = \mathcal{E}_{i}D$ where $D$ is a square matrix of order $T$. Following the definition of the residual $r_{\varepsilon}$ in \eqref{eq:residual_e}, we have 
\begin{equation}\label{eq:NE}
a(q)r_{\varepsilon} = N(q)L(q)\mathcal{E}_{i}  
= \bar{N}\bar{L} \left[\begin{matrix} I \\ qI \\ \vdots \\ q^{d_{N}}I \end{matrix}\right] \mathcal{E}_{i} = \bar{N}\bar{L}{D_{i}} \, ,
\end{equation}
where the matrices are defined as $\bar{N} := [N_{0}, \, N_{1}, \, \cdots, \, N_{d_{N}} ]$, $\bar{L} := diag [L, \, L, \, \cdots, \, L]$ and $L = L(q)$, and 
$D_{i} : = [\mathcal{E}_{i}^{T}, \ (\mathcal{E}_{i}D)^{T}, \ \cdots, \ (\mathcal{E}_{i}D^{d_{N}})^{T}]^{T}$. Given a particular disturbance pattern $d_i$, then the $\mathcal{L}_2$-norm of the residual as defined in \eqref{eq:r_e_norm} can be reformulated as a quadratic function,  
\begin{equation}\label{eq:l2_nonlinear}
\| r_{\varepsilon_{i}} \|_{\mathcal{L}_{2}}^{2} = \bar{N} Q_{i} \bar{N}^{\top}, \ \ \ Q_{i} = (\bar{L}D_{i}) G (\bar{L}D_{i})^{\top},
\end{equation}
where $G$ is a positive semi-definite matrix with a dimension of $T$ such that $G(i,j) = \big\langle a(q)^{-1} b_i, \ a(q)^{-1}b_j \big\rangle $ in which $b_{i}, b_{j} \in \mathcal{W}_{T}^{1}$ are the discrete-time unit impulses. It can be observed from \eqref{eq:l2_nonlinear} that the matrix $Q_{i}$ is also positive semi-definite since $Q_{i}$ is symmetric and for all non-zero row vector $\bar{N}$, we have $\bar{N} Q_{i} \bar{N}^{\top} = \| r_{\varepsilon_{i}} \|_{\mathcal{L}_{2}}^{2} \geq 0$.

\begin{Rem}[Training with multiple output mismatch signatures]
	\label{rem:robust_filter_univ}
	In order to robustify the diagnosis filter, it can be {\em ``trained''} 
	by utilizing the information of 
	multiple instances of load disturbances, i.e., $\big\{d_{i}\big\}_{i=1}^{m}$, in the normal system operations (without attacks). For each disturbance signature $d_{i}$, the output mismatch signature $\varepsilon_{i}$ and also the matrices $\mathcal{E}_{i}$, $D_{i}$, $Q_{i}$ can be computed from \eqref{eq:mismatch_sig} to \eqref{eq:l2_nonlinear}. Next, according to \eqref{opt:robust-poly-mis} in Definition~\ref{def:robust_detect_mis} and also \eqref{eq:l2_nonlinear}, the robust diagnosis filter has an optimization-based characterization where the objective function can be formulated to 
	minimize $\bar{N} ((1/m) \sum_{i=1}^{m} Q_{i}) \bar{N}^{\top}$ (average-cost viewpoint) or $\text{max}_{i \leq m} (\bar{N} Q_{i} \bar{N}^{\top})$ (worst-case viewpoint). We note that from computational perspective the average-cost is much more preferred. %
\end{Rem}

\begin{Thm}[Tractable quadratic programming characterization]\label{thm:qp_univ}
	Consider the polynomial matrices $H(q) = H_{0} + q \, H_{1}$ and $F(q) = F$ where $H_{0}, H_{1} \in \mathbb{R}^{n_{r} \times n_{\tilde{x}}}$ and $F \in \R^{n_{r} \times n_{f}}$ are constant matrices. The robust diagnosis filter introduced in \eqref{opt:robust-poly-mis} of Definition~\ref{def:robust_detect_mis} for the univariate attack can be obtained by solving the optimization program,
	\begin{align}\label{opt:quadratic_opt} 
	\min\limits_{\bar{N}}\quad & \bar{N} (\frac{1}{m} \sum_{i=1}^{m} Q_{i}) \bar{N}^{\top}  \nonumber \\
	\mbox{s.t.}\quad& \bar{N}\bar{H} = 0 \, , \\
	& \big\| \bar{N}\bar{F} \big \|_{\infty} \geq 1 
	\, , \nonumber
	\end{align}
	where $\|\cdot\|_{\infty}$ denotes the infinite vector norm, 
	and 
	\begin{align*}
	\bar{H} \Let \left[\begin{matrix} H_{0} & H_{1} & 0 & \cdots & 0 \\ 0 & H_{0} & H_{1} & 0 & \vdots \\ \vdots & 0  & \ddots & \ddots & 0 \\ 0 & \cdots & 0 & H_{0} & H_{1} \end{matrix}\right]. 
	\end{align*}
	Similar to the matrix $\bar{L}$ in \eqref{eq:l2_nonlinear}, $\bar{F}$ in \eqref{opt:quadratic_opt} is defined as $\bar{F} := diag [ F, \, F, \, \cdots, \, F]$. 
	Besides, a diagnosis filter from the program \eqref{opt:quadratic_opt} but simply adding the following linear constraint can have non-zero steady-state residual  
	that could approximate the attack value of $f$, 
	\begin{align}\label{eq:lem_uniatt_track}
	-a(1)^{-1}\displaystyle\sum\limits_{i=0}^{d_{N}} {N}_{i}F = 1.
	\end{align}
\end{Thm}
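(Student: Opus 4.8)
The plan is to obtain \eqref{opt:quadratic_opt} by rendering each ingredient of the abstract program \eqref{opt:robust-poly-mis} — the objective and the two constraints — in terms of the finite-dimensional data $(\bar{N}, \bar{H}, \bar{F}, \{Q_{i}\})$, then to address convexity/tractability, and finally to establish the steady-state addendum separately. For the objective, the work is essentially already done: by \eqref{eq:l2_nonlinear} each single-disturbance cost is the quadratic form $\|r_{\varepsilon_{i}}\|_{\mathcal{L}_{2}}^{2} = \bar{N} Q_{i} \bar{N}^{\top}$ with $Q_{i} \succeq 0$, so invoking the average-cost viewpoint of Remark~\ref{rem:robust_filter_univ} the objective becomes $\bar{N}(\tfrac{1}{m}\sum_{i} Q_{i})\bar{N}^{\top}$, which is a convex quadratic since a convex combination of positive semidefinite matrices is positive semidefinite.

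For the equality constraint, I would expand $N(q)H(q) = (\sum_{i=0}^{d_{N}} N_{i} q^{i})(H_{0} + q H_{1})$ and collect powers of $q$: the polynomial vector vanishes identically iff every coefficient, namely $N_{0}H_{0}$, the mixed terms $N_{j-1}H_{1} + N_{j}H_{0}$ for $1 \le j \le d_{N}$, and $N_{d_{N}}H_{1}$, equals zero. A direct block multiplication then shows these coefficients are exactly the block-columns of the product $\bar{N}\bar{H}$ with the stated bidiagonal $\bar{H}$, so $N(q)H(q)=0 \iff \bar{N}\bar{H}=0$, an affine constraint in $\bar{N}$.

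For the detectability constraint, with $F(q)=F$ constant and $n_{f}=1$ we have $N(q)F(q)f = f\,N(q)F = f\sum_{i}(N_{i}F)\,q^{i}$; since $\mathcal{F}$ contains nonzero scalars, the requirement ``$N(q)F(q)f \neq 0$ for all $f \in \mathcal{F}$'' is equivalent to the polynomial $N(q)F$ being not identically zero, i.e. the coefficient row $\bar{N}\bar{F} = [N_{0}F,\dots,N_{d_{N}}F]$ being nonzero. Because the program is invariant under scaling $\bar{N}\mapsto \lambda\bar{N}$, I would replace this open, scale-degenerate condition by the normalization $\|\bar{N}\bar{F}\|_{\infty}\ge 1$. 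This is precisely the step I expect to be the main obstacle, and the reason the theorem speaks of a \emph{family} of convex QPs: the set $\{\|\bar{N}\bar{F}\|_{\infty}\ge 1\}$ is the complement of an $\infty$-ball and hence nonconvex. I would resolve this by a disjunctive argument — $\|\bar{N}\bar{F}\|_{\infty}\ge 1$ holds iff $s\,[\bar{N}\bar{F}]_{j}\ge 1$ for some coordinate $j\in\{1,\dots,d_{N}+1\}$ and sign $s\in\{\pm 1\}$ — so that fixing $(j,s)$ gives a genuine convex QP (positive-semidefinite objective, affine constraints $\bar{N}\bar{H}=0$ and $s[\bar{N}\bar{F}]_{j}\ge 1$), and the desired filter is the minimizer over the resulting $2(d_{N}+1)$ convex QPs.

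Finally, for the tracking claim, I would argue at the signal level. Once $\bar{N}\bar{H}=0$ is enforced, term (I) in \eqref{eq:residual_gen} vanishes and the residual is driven by term (II), namely $-a(q)^{-1}N(q)F\,f$, up to the already-minimized mismatch term (III). For a constant (step) attack $f$, the steady-state value is given by the DC gain, i.e. the transfer evaluated at $q=1$: $r_{ss} = -a(1)^{-1}N(1)F\,f = -a(1)^{-1}\big(\sum_{i} N_{i}F\big)f$, using $N(1)=\sum_{i}N_{i}$ and the final-value property for the stable operator $a(q)^{-1}$. The added linear constraint \eqref{eq:lem_uniatt_track} fixes $-a(1)^{-1}\sum_{i} N_{i}F = 1$, whence $r_{ss}=f$; the word ``approximate'' reflects that term (III) is only minimized rather than annihilated, so the steady-state identity holds up to the residual mismatch energy.
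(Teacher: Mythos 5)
Your proposal is correct and follows essentially the same route as the paper's proof: coefficient-matching to turn $N(q)H(q)=0$ into the affine constraint $\bar{N}\bar{H}=0$, reduction of the univariate detectability condition to $N(q)F\neq 0$ followed by a scaling normalization to $\|\bar{N}\bar{F}\|_{\infty}\geq 1$ (handled coordinate-by-coordinate as a family of convex QPs, exactly as the paper does via \cite[Lemma~4.3]{Esfahani2016}), and the DC-gain evaluation at $q=1$ yielding the steady-state tracking constraint \eqref{eq:lem_uniatt_track}. You merely spell out the disjunctive decomposition, the scale-degeneracy of the constraint, and the role of the minimized mismatch term (III) more explicitly than the paper's terse argument, which is a faithful elaboration rather than a different approach.
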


\begin{proof}
	The key step is to observe that in \eqref{opt:robust-poly-mis} we can rewrite $N(q)H(q) = \bar{N}\bar{H} [I ,\ qI ,\ \cdots ,\ q^{d_N+1}I ]^\top$, and $N(q)F(q) = \bar{N}\bar{F} [I ,\ qI ,\ \cdots ,\ q^{d_N}I ]^\top$. Note that in the scenario of univariate attack, one can directly rewrite the last constraint of \eqref{opt:robust-poly-mis} as $N(q)F(q) \neq 0$, and scale this inequality 
	to arrive at the one of $\| \bar{N}\bar{F} \|_{\infty} \geq 1$ in \eqref{opt:quadratic_opt}. 
	Next, if $\bar{N}\bar{H}=0$, the diagnosis filter becomes $r[k] = -a(q)^{-1}N(q)F(q)f[k] + a(q)^{-1}N(q)L(q)\varepsilon[k]$. If there is no output mismatch,  as a discrete-time signal, the steady-state value of the filter residual under the univariate attack would be $-a(q)^{-1}N(q)F(q)f|_{q=1}$. Note that $N(1)F(1) = \sum_{i=0}^{d_{N}} {N}_{i}F$.
	Thus when there exists output mismatch, with the linear constraint of 
	\eqref{eq:lem_uniatt_track}, 
	the residual in its steady-state value could approximate $f$.
\end{proof}

For the last constraint of \eqref{opt:quadratic_opt}, as indicated by \cite[Lemma~4.3]{Esfahani2016}, $\| \bar{N}\bar{F} \|_{\infty} \geq 1$ if and only if there exists a coordinate $j$ that $\bar{N}\bar{F}v_{j} \geq 1$ or $\bar{N}\bar{F}v_{j} \leq -1$. Here $v_{j} = [0,\cdots, 1, \cdots, 0]_{(d_{N}+1)}^{T}$ in which the only non-zero element of the vector is the $j$-th element. Thus, one can view \eqref{opt:quadratic_opt} as a family of $d_{N}+1$ standard QPs with linear constraints that for each QP the last constraint becomes $\bar{N}\bar{F}v_{j} \geq 1$ (or $\bar{N}\bar{F}v_{j} \leq -1$ since the set for the solutions of $\bar{N}$ in \eqref{opt:quadratic_opt} is symmetric). 
In addition, recall that the matrix $Q_{i}$ in the objective function is positive semi-definite, which implies that the resulted standard QPs are also convex, and hence tractable. Considering that the training phase of our own optimization-based characterization is a matter of matrix computation and the programs are convex QPs, we would say that our approach is not computational expensive, especially comparing with many machine-learning type methods.

\subsection{Extension to multivariate attack scenarios} \label{subsec:detect_multi}

Inspired by the techniques developed in \cite{Pan2020}, we further extend the preceding design of the robust diagnosis filter to the scenario of multivariate attacks. 

\begin{algorithm}[t!]
	\caption{Filter construction and validation in PF.}\label{alg:filterconstr_algorithm} 
	\begin{enumerate}
		\item[1)] \textbf{Pre-training}: Solve \eqref{opt:max-min-relax} for each $j \in \{1,\, \dots, \, 2d_N+2\}$. Check if there exists $\gamma^{\star}_{j} > 0$ and find the maximum. 
		\item[2)] \textbf{Training phase}: 
		\begin{enumerate}
			\item[(i)] For each instance $d_{i}$, 
			obtain $y_{p}$ and $y$ in the normal operations (without attacks). Calculate the matrices $\mathcal{E}_{i}$, $D_{i}$, and $Q_{i}$ according to \eqref{eq:mismatch_sig} - \eqref{eq:l2_nonlinear}. 
			\item[(ii)] For a number of $m$ instances of load disturbances, perform the same process in (i).
			\item[(iii)] Set the initial value of $\gamma_{j}$ 
			to be $\max_{\{j\le 2d_N+2\}} \gamma^{\star}_{j}$ from pre-training. Solve \eqref{opt:quadratic_opt} with the derived matrix $Q_{i}$. Tune the value of $\gamma_{j}$ until it reaches maximum.
		\end{enumerate}
		\item[3)] \textbf{Testing phase:} For another instance of $d_{i}$, 
		run the PF simulation where the FDI attacks are also launched. Run the resulted diagnosis filter for detection.
	\end{enumerate}
\end{algorithm}
\begin{Cor}[Robust diagnosis filter under multivariate attacks]\label{cor:qua_opt_transient} 
	Consider the diagnosis filter in Definition~\ref{def:robust_detect_mis} where the 
	set of multivariate attacks is defined as $\mathcal{F}  = \{f \in \mathbb{R}^{n_{f}} : f = F_{\mathrm b}^{\top} \alpha, \ \alpha \in \mathcal{A} \}$ in which $\mathcal{A} = \{ \alpha \in \mathbb{R}^{d} \ | \ A\alpha \geq b  \}$ (see Definition~\ref{def:robust_detect_mis} for the denotation of 
	these variables). 
	Given $j \in \{1, \, \dots, \, 2d_{N}+2 \}$, for each $j$, consider a family of the following quadratic programs, 
	\begin{align}\label{opt:qua_mul_trans}
	\min\limits_{\bar{N}, \lambda}\quad & \bar{N} (\frac{1}{m} \sum_{i=1}^{m} Q_{i}) \bar{N}^{\top} \, , \nonumber \\
	\mbox{s.t.}\quad& b^{\top}\lambda \geq \gamma_{j}, \tag{${\rm QP}_{j}$} \\
	& (-1)^{j}N_{\lfloor j/2 \rfloor}FF_{b} = \lambda^{\top}A \, , \nonumber\\ 
	& \bar{N}\bar{H} = 0, \ \lambda \geq 0 \, , \nonumber 
	\end{align}
	where ${\lfloor \cdot \rfloor}$ is the ceiling function that maps the argument to the least integer. Then, the best solution of the programs~\eqref{opt:qua_mul_trans} among $j \in \{1,\, \dots, \, 2d_N+2\}$ solve the problem \eqref{opt:robust-poly-mis} in Definition~\ref{def:robust_detect_mis} for the multivariate attack scenario.
\end{Cor}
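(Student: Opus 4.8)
The plan is to follow the same template as the univariate Theorem~\ref{thm:qp_univ}, keeping the objective $\bar N(\tfrac1m\sum_i Q_i)\bar N^\top$ and the equality $\bar N\bar H=0$ verbatim, and reworking only the detection constraint $N(q)F(q)f\neq0$ for all $f\in\mathcal F$. First I would use $F(q)=F$ to write $N(q)Ff=\sum_{i=0}^{d_N}(N_iFf)\,q^i$, so that this scalar polynomial in $q$ is the zero polynomial exactly when every coefficient $N_iFf$ vanishes. Hence detection of a given attack $f$ is the statement that at least one coefficient $N_iFf$ is nonzero, and robust detection of the whole set $\mathcal F$ is the requirement that this holds uniformly, with a positive margin, over $f\in\mathcal F$.

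Next I would reduce this to a disjunction over coordinates and signs, exactly as the scaling to $\|\bar N\bar F\|_\infty\ge1$ was used in the univariate proof and as justified by \cite[Lemma~4.3]{Esfahani2016}. Concretely, the filter detects all of $\mathcal F$ with margin $\gamma>0$ iff there is a coefficient index $i\in\{0,\dots,d_N\}$ and a sign $s\in\{+1,-1\}$ with $\min_{f\in\mathcal F} s\,N_iFf\ge\gamma$; enumerating the $2d_N+2$ pairs $(i,s)$ gives the index $j$ (with $i=\lfloor j/2\rfloor$ and $s=(-1)^j$). Substituting the parametrization $f=F_{\mathrm b}^\top\alpha$ with $\alpha\in\mathcal A=\{\alpha:A\alpha\ge b\}$ turns the inner condition into a linear program in $\alpha$,
\[
\min_{\alpha}\ \big(s\,N_iFF_{\mathrm b}\big)\alpha\quad\text{s.t.}\quad A\alpha\ge b,
\]
whose optimal value is required to be at least $\gamma_j$.

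The core device is then LP strong duality. The dual of the program above is $\max_{\lambda\ge0} b^\top\lambda$ subject to $\lambda^\top A=s\,N_iFF_{\mathrm b}$, so its primal value is $\ge\gamma_j$ iff there exists $\lambda\ge0$ with $\lambda^\top A=s\,N_iFF_{\mathrm b}$ and $b^\top\lambda\ge\gamma_j$ --- precisely the constraints $(-1)^jN_{\lfloor j/2\rfloor}FF_{\mathrm b}=\lambda^\top A$, $b^\top\lambda\ge\gamma_j$, $\lambda\ge0$ of $(\mathrm{QP}_j)$. Adjoining the untouched equality $\bar N\bar H=0$ and the averaged quadratic objective (which is convex since each $Q_i\succeq0$ by the argument after \eqref{eq:l2_nonlinear}) shows that each $(\mathrm{QP}_j)$ is a convex QP and that its feasible set is exactly the set of filters detecting $\mathcal F$ via coordinate $i$ with sign $s$. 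Because robust detection holds iff it holds for at least one such $(i,s)$, the feasible region of \eqref{opt:robust-poly-mis} is the union over $j$ of these feasible sets, and minimizing the common objective over the union amounts to taking the best solution among the $2d_N+2$ programs, which is the assertion.

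I expect the main obstacle to be the equivalence in the second step rather than the duality in the third: I must argue that restricting to a single sign-definite coordinate with a uniform positive margin is not merely sufficient for $N(q)Ff\neq0$ on $\mathcal F$ but actually characterizes the robust detection requirement, so that the best-of-$j$ construction solves \eqref{opt:robust-poly-mis} and is not conservative. I would handle this by anchoring the notion of ``robust detection'' to the margin-based definition inherited from \cite{Pan2020} and the univariate case, and I would separately record the mild regularity needed for Step~3 --- namely that $\mathcal A$ is nonempty and the linear program is bounded, so that strong duality and the pre-training search for the largest feasible $\gamma_j$ in Algorithm~\ref{alg:filterconstr_algorithm} are well posed.
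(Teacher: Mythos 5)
Your route is the same one the paper takes, except that you reconstruct explicitly what the paper imports wholesale from \cite[Theorem IV.3]{Pan2020}: pass to the inner minimization over $\alpha\in\mathcal A$, enumerate the $2d_N+2$ coordinate--sign pairs of the coefficient vector of $N(q)Ff$, and dualize each resulting linear program so that $b^\top\lambda\ge\gamma_j$, $(-1)^jN_{\lfloor j/2\rfloor}F\basis=\lambda^\top A$, $\lambda\ge0$ become the constraints of \eqref{opt:qua_mul_trans}. That part of your argument is sound, and making the duality step explicit is a genuine improvement in self-containedness. Note, moreover, that for the direction the corollary actually needs --- feasibility of \eqref{opt:qua_mul_trans} with $\gamma_j>0$ implies detection of every $f\in\mathcal F$ --- \emph{weak} duality already suffices: if $\lambda\ge0$, $\lambda^\top A=s\,N_iF\basis$ and $b^\top\lambda\ge\gamma_j$, then $s\,N_iF\basis\alpha=\lambda^\top A\alpha\ge b^\top\lambda\ge\gamma_j$ for all $\alpha\in\mathcal A$. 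So the regularity conditions you record for strong duality are only needed for your converse, which brings us to the gap.

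The genuine flaw is precisely the step you flag as the ``main obstacle'' and then resolve by fiat: the claimed \emph{equivalence} in your second step, and consequently the claim that the feasible region of \eqref{opt:robust-poly-mis} is the \emph{union} over $j$ of the feasible sets of \eqref{opt:qua_mul_trans}. This is false in general: robust detection requires $\min_{\alpha\in\mathcal A}\max_i|N_iF\basis\alpha|>0$, whereas your enumeration certifies $\max_{i,s}\min_{\alpha\in\mathcal A}s\,N_iF\basis\alpha>0$, and the max and min do not interchange. A concrete failure: with two coefficients $N_0F\basis\alpha=\alpha_1$, $N_1F\basis\alpha=\alpha_2$ and $\mathcal A$ the segment $\{\alpha_1+\alpha_2=1,\ -1\le\alpha_1\le2\}$, one has $\max(|\alpha_1|,|\alpha_2|)\ge1/2$ on all of $\mathcal A$ (every attack is detected with margin), yet both coefficients change sign on $\mathcal A$, so no $(\mathrm{QP}_j)$ is feasible with $\gamma_j>0$. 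The paper does not make your exactness claim: its proof explicitly presents \eqref{opt:max-min-relax} as a \emph{relaxation} of the maximin program \eqref{opt:robust_maximin} satisfying $\max_{j}\gamma^\star_j\le\gamma^\star$, and concludes only that a solution with $\gamma_j^\star>0$ is a feasible --- hence valid, but possibly conservative --- robust filter. Redefining ``robust detection'' to be the coordinate-wise margin criterion, as you propose, does not repair this; it changes the problem, since \eqref{opt:robust-poly-mis} is stated with the constraint $N(q)F(q)f\neq0$ for all $f\in\mathcal F$. The correct statement of your final step is one-directional: each feasible point of \eqref{opt:qua_mul_trans} is feasible for \eqref{opt:robust-poly-mis}, and the best-of-$j$ solution minimizes the mismatch objective over this restricted family, which is what the corollary (read in the paper's sense) asserts.
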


\begin{proof}
	In the scenario of multivariate attacks, the two constraints in \eqref{opt:robust-poly-mis} can be characterized by the 
	maximin program,
	\begin{align}\label{opt:robust_maximin}
	\gamma^\star \Let \max\limits_{ \bar{N} \in \mathcal{N} } \ \min\limits_{\alpha \in \mathcal{A} } \ \big\{ \mathcal{J}(\bar{N},\alpha) \big\}, 
	\end{align}
	where the set $\mathcal{N} := \big\{\bar{N} \in \mathbb{R}^{(d_{N}+1)n_{r}} \ | \ \bar{N}\bar{H} = 0 \big\}$. The source of the cost function $\mathcal{J}(\bar{N},\alpha)$ is referred to \cite[Section IV.B]{Pan2020}. 
	Then, according to \cite[Theorem IV.3]{Pan2020}, we know that the 
	maximin program \eqref{opt:robust_maximin} can be reformulated and relaxed to a set of linear programs (LPs), 
	\begin{align}\label{opt:max-min-relax}
	\gamma^{\star}_{j} := \max\limits_{\bar{N}, \, \lambda} \quad&  b^{\top}\lambda \nonumber \\
	\mbox{s.t.} \quad& (-1)^{j}N_{\lfloor j/2 \rfloor}F\basis = \lambda^{\top}A, \tag{${\rm LP}_{j}$}\\
	& \bar{N}\bar{H} = 0, \ \lambda \geq 0 \, , \nonumber 
	\end{align}
	Namely, the solution to the program~\eqref{opt:max-min-relax} is a feasible solution to the maximin program \eqref{opt:robust_maximin},  
	and it satisfies $\max_{\{j\le 2d_N+2\}} \gamma^{\star}_{j} \le \gamma^\star$. Then it is easy to obtain the finite \eqref{opt:qua_mul_trans} for the multivariate attack scenario. We conclude the proof by noting that if there exist a $\gamma_{j}^\star > 0$, then a resulted filter from \eqref{opt:max-min-relax} could detect all the 
	plausible multivariate attacks in the set $\mathcal{F}$. 
\end{proof}

From Corollary~\ref{cor:qua_opt_transient}, we can see that for any $j \in \{1,\, \dots, \, 2d_N+2\}$, if one can find a $\gamma_{j} > 0$ that \eqref{opt:qua_mul_trans} is still feasible, then the solution to \ref{opt:qua_mul_trans} offers a robust diagnosis filter in the type of Definition~\ref{def:robust_detect_mis} for multivariate attacks. It is worth mentioning that the obtained \eqref{opt:qua_mul_trans} in Corollary~\ref{cor:qua_opt_transient} 
is also a standard convex QP. 

For a better illustration, Algorithm~\ref{alg:filterconstr_algorithm} concludes the filter construction and validation process for an implementation in the PF simulator under the scenario of multivariate attacks. In the ``pre-training'', one needs to solve~\eqref{opt:max-min-relax} for each $j$ to see if there exists $\gamma^{\star}_{j} > 0$. If yes, next in the ``training phase'', $\mathcal{E}_{i}$, $D_{i}$ and $Q_{i}$ can be obtained from the simulation data in the normal operations for each disturbance signature $d_{i}$. Then the program~\eqref{opt:qua_mul_trans} needs to be solved and the resulted robust diagnosis filter can be ``tested''. 
We would like to highlight that, the robust diagnosis filter from \eqref{opt:qua_mul_trans} does not necessarily enforce a non-zero steady-state residual in multivariate attack scenarios. Regarding its steady-state behavior, the program \eqref{opt:robust_maximin} can be modified into $\mu^\star \Let \max_{\{\bar{N} \in \mathcal{N}\}} \ \min_{\{\alpha \in \mathcal{A}\}} \ | \bar{N} \bar{F} \alpha |$ which has an exact convex reformulation. Then a similar treatment as the one in Corollary~\ref{cor:qua_opt_transient} can be deployed. 

In the end, we summarize the filter features under univariate or multivariate attacks and in the pure model-based or our proposed data-assisted model-based methods, in Table~{\ref{tab:sum_work}}.

\color{red}
\renewcommand\arraystretch{1.5}
\begin{table}[t]
	\centering
	\caption{A summary of filter features in different scenarios.}
	\label{tab:sum_work}
	\begin{tabular}{|p{0.01\textwidth}|p{0.06\textwidth}|p{0.06\textwidth}|p{0.06\textwidth}|p{0.06\textwidth}|}
		\hline
		\multicolumn{1}{|c|}{\multirow{2}{*}{}} & \multicolumn{2}{c|}{\textbf{Attack scenario}} & \multicolumn{2}{c|}{\textbf{Model-based detection}} \\\cline{2-5}
		\multicolumn{1}{|c|}{}  & \multicolumn{1}{c|}{univariate}  & \multicolumn{1}{c|}{multivariate} & \multicolumn{1}{c|}{pure} & \multicolumn{1}{c|}{data-assisted} \\ 
		\hline
		\multicolumn{1}{|l|}{\textbf{Filter features}} &  \multicolumn{1}{c|}{$n_{f} = 1$} &  \multicolumn{1}{c|}{$n_{f} > 1$} &  \multicolumn{1}{c|}{\begin{tabular}[c]{@{}c@{}}$Q_{i} = 0$, \\ in \cite{Pan2020}\end{tabular}} & \multicolumn{1}{c|}{\begin{tabular}[c]{@{}c@{}}$Q_{i}$ from \eqref{eq:l2_nonlinear}, \\this study \end{tabular}} \\
		\hline
	\end{tabular}
\end{table}
\color{black}

%

\section{Numerical Results} \label{sec:results}

\subsection{Test system and robust filter description} \label{subsec:test_sys}

To validate the effectiveness of our data-assisted model-based diagnosis filter, it has been implemented in the high-fidelity simulator of PF to detect FDI attacks on the ACE signal of AGC in the three-area 39-bus system. The system parameters of the 
abstract linear model are referred to \cite{bevrani2008}, and the specifications of the model in PF are available at \cite{powerfactory39}. Following Algorithm~\ref{alg:filterconstr_algorithm}, to obtain output mismatch signatures, we run the simulations to obtain $y_{p}$ and $y$ with the same input $d$ of load disturbances in normal operations. The adjustable degree of the residual generator is set to $d_{N} = 3$ which is much less than the order of the linearized model (it is a 19-order model of \eqref{eq:linearSP}); the scalar polynomial $a(q)$ is set $a(q) = (q - p)^{d_{N}}/{{(1-p)}^{d_{N}}}$ where $p$ is a user-defined variable, acting as the pole of $R_{\varepsilon}(q)$. We let $T_s = 0.5 \, \mathrm{s}$ such that the simulation results from PF are sampled, and thus for a simulation time $t_{s} = 10\, \mathrm{s}$, $T = 20$ in \eqref{eq:mismatch_sig}. 
We also perform a comparison study for the two methods: our data-assisted model-based filter and the pure model-based one.

\subsection{Simulation Results}

The first simulation considers the univariate attack 
where an attacker has manipulated the power exchange between Area 1 and Area 2 from $t = 30 \, \mathrm{s}$ in the horizon of $60 \, \mathrm{s}$. To  
challenge the filter, the disturbances are modeled as stochastic load patterns: load variation of Load 4 in Area 1 is a random zero-mean Gaussian signal. A number of $m=100$ load disturbance instances are generated for the ``training phase'' where for each of them, a simulation with $t_{s} = 10 \, \mathrm{s}$ is conducted to obtain output mismatch signatures. The design variables $\bar{N}$ are derived from 
Theorem~\ref{thm:qp_univ}. To compare, a pure model-based filter 
is also derived by letting $Q_{i} = 0$ (see Table~\ref{tab:sum_work}), which can be transformed into finite LPs. The simulation results are referred to Appendix B. 
We can see that our data-assisted model-based filter has significant improvements in the regards of mitigating the effect from output mismatch on the residual, comparing with the pure model-based one. Besides, 
it can 
track 
attack value 
through its steady-state residual. The pure model-based filter fails by triggering false alarms, when applied 
to the PF simulator. This can be expected since it utilizes the 
abstract model information only and suits for such given model of \eqref{eq:linearSP}. 


\begin{figure}[t!]
	\centering
	\begin{subfigure}[t]{0.47\textwidth}
		\centering
		\includegraphics[scale=0.44]{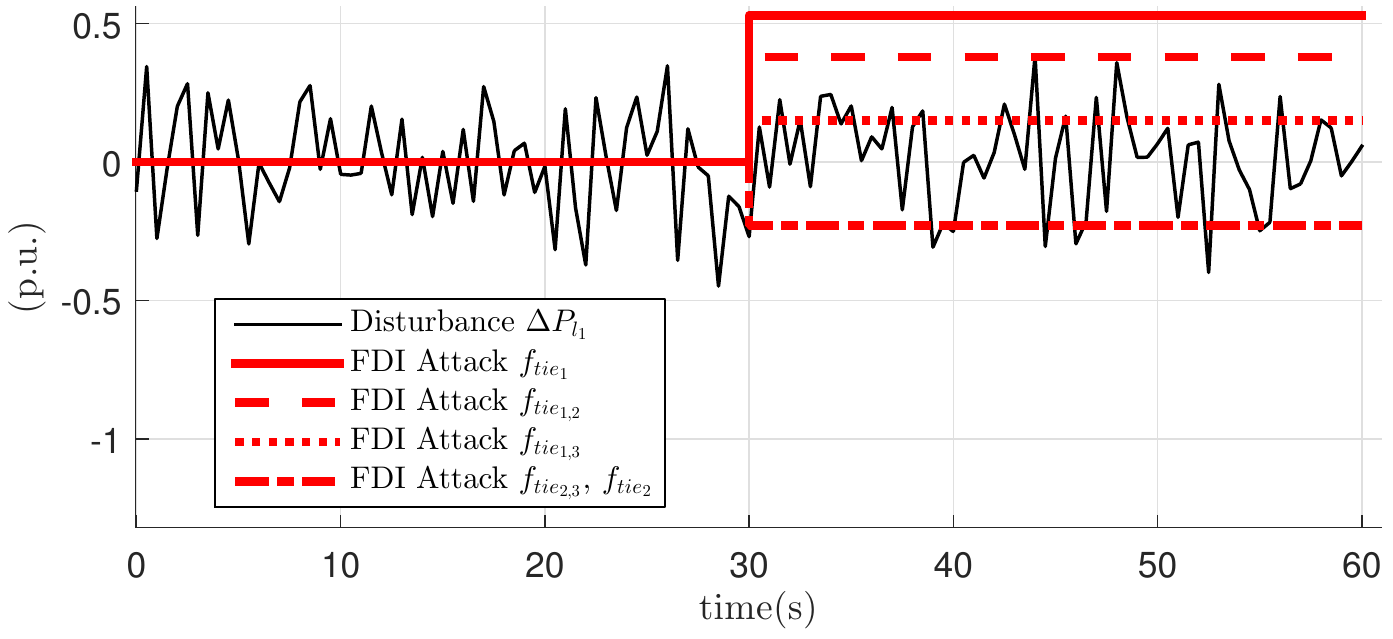}
		\caption{Load disturbance and multivariate attack.}\label{subfig51:mastd}
	\end{subfigure}
    ~
    \begin{subfigure}[t]{0.47\textwidth}
    	\centering
    	\includegraphics[scale=0.445]{fig1ad_mastl_p08_2609.pdf}
    	\caption{Load disturbance and multivariate attack.}\label{subfig61:mastd}
    \end{subfigure}
	\\ 
	\begin{subfigure}[t]{0.47\textwidth}
		\centering
		\includegraphics[scale=0.44]{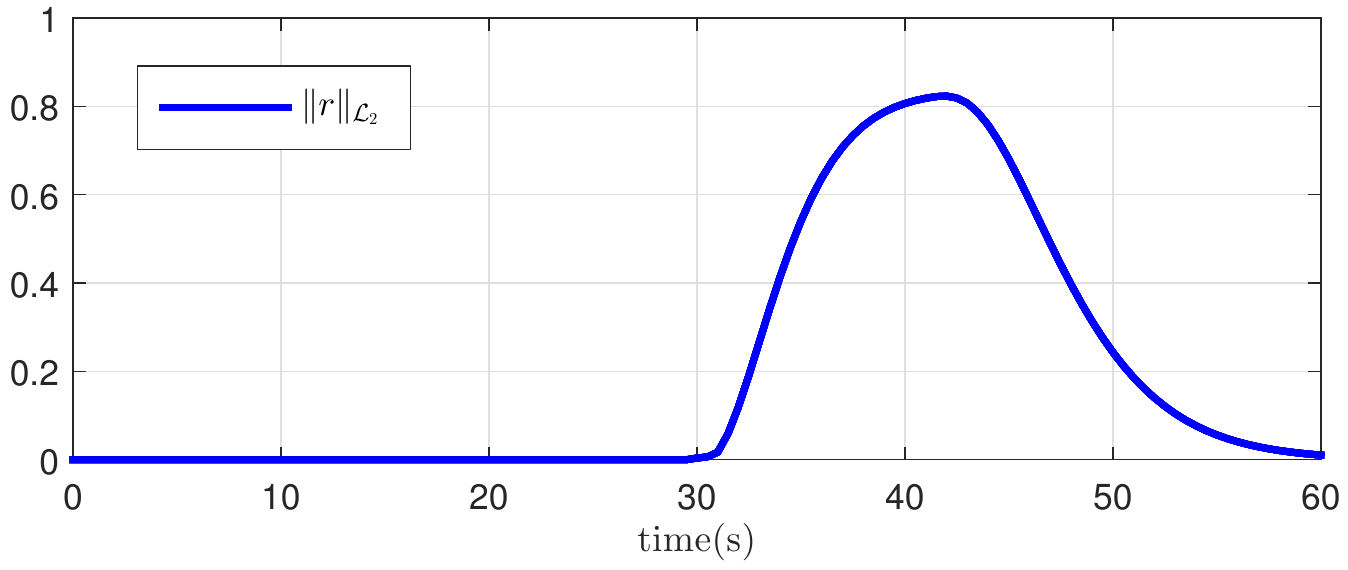}
		\caption{Energy of residual 
			without output mismatch.}\label{subfig52:ro}
	\end{subfigure}
    ~
    \begin{subfigure}[t]{0.47\textwidth}
    	\centering
    	\includegraphics[scale=0.445]{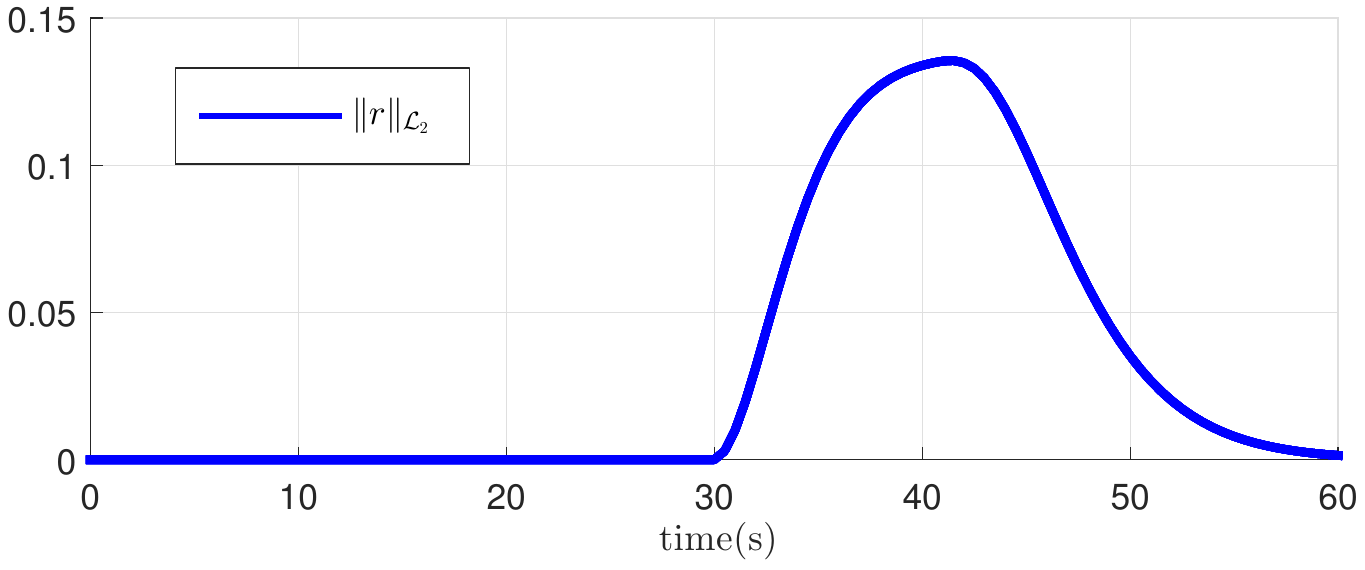}
    	\caption{Energy of residual 
    		without output mismatch.} \label{subfig62:rrob}
    \end{subfigure}
	\\
	\begin{subfigure}[t]{0.47\textwidth}
		\centering
		\includegraphics[scale=0.455]{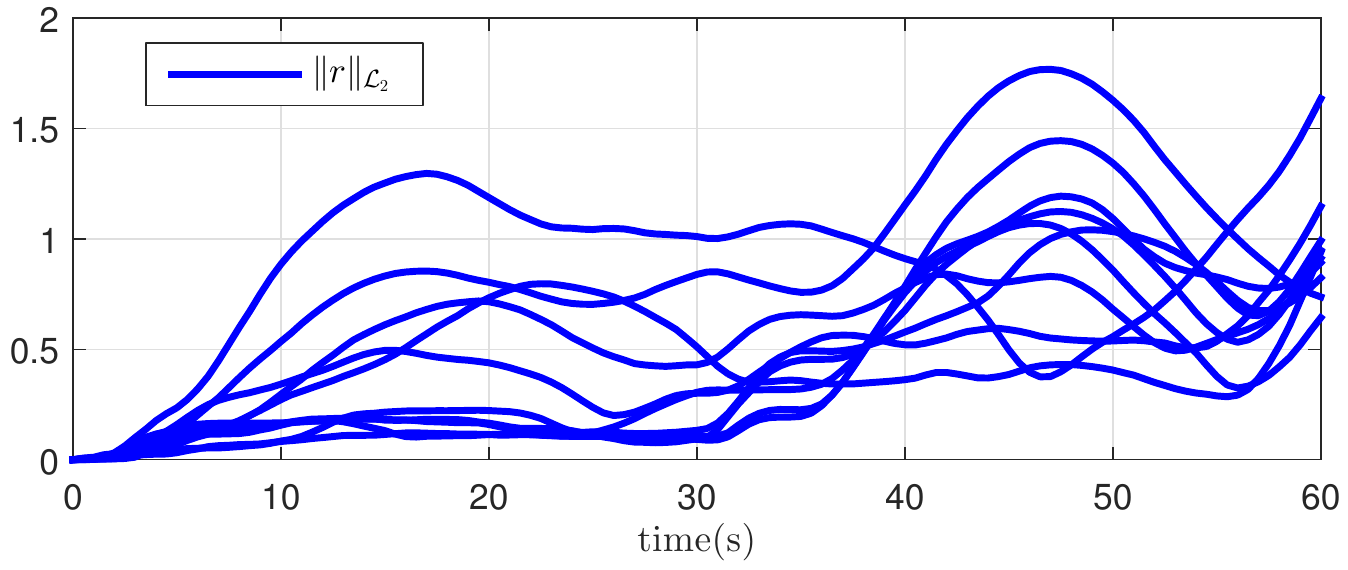}
		\caption{Energy of residual 
			with output mismatch. 
		}\label{subfig:ron210stl}
	\end{subfigure}
    ~
    \begin{subfigure}[t]{0.47\textwidth}
    	\centering
    	\includegraphics[scale=0.455]{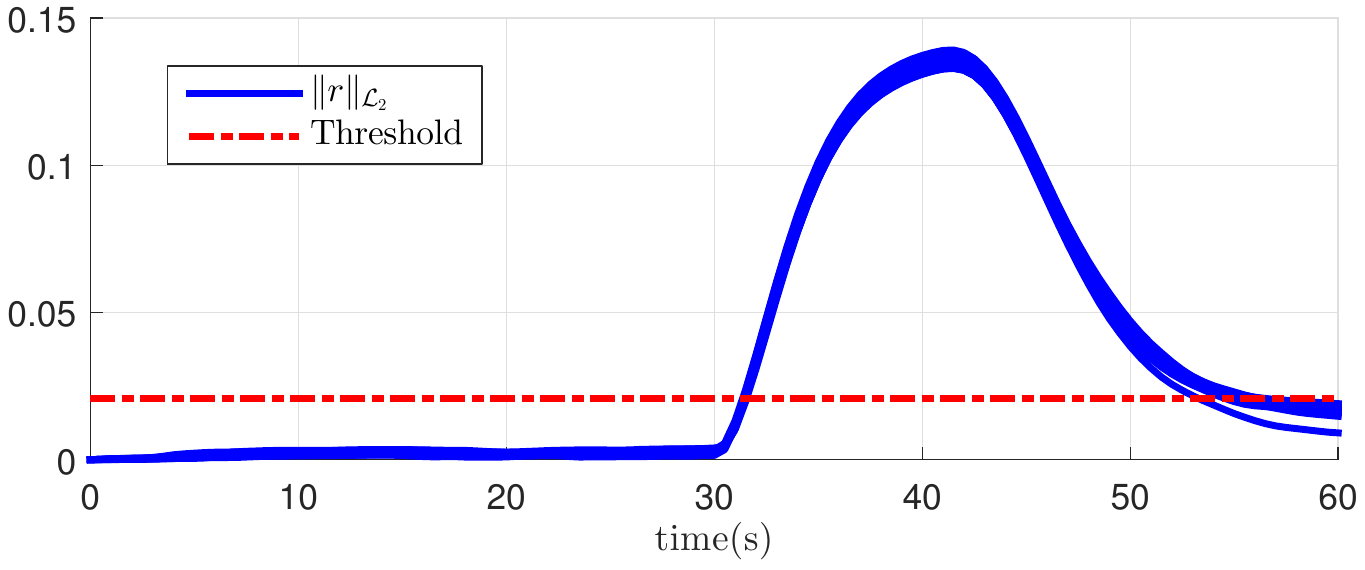}
    	\caption{Energy of residual 
    		with output mismatch. 
    	}\label{subfig:rrobn210stl}
    \end{subfigure}
	\caption{The left: pure model-based filter in \cite{Pan2020}. It is derived by letting $Q_{i} = 0$ in Corollary~\ref{cor:qua_opt_transient}, and it is essentially a set of LPs.
	The right: data-assisted model-based filter. It is derived by solving \eqref{opt:qua_mul_trans} in Corollary~\ref{cor:qua_opt_transient} where $Q_{i}$ is from \eqref{eq:l2_nonlinear}} 
	\label{fig:resfig_puremodel_ma} %
\end{figure}

In the second simulation, we move to the scenario of multivariate attacks. There are 5 
power exchanges between 
areas that are attacked, 
and correspondingly there exist 3 basis vectors in the spanning set $\mathcal{F}$: $f_{1}= [0.1 \ \ 0 \ \ 0.1 \ \ 0 \ \ 0]^T$, $f_{2}= [0.1 \ \ 0.15 \ \ 0.25 \ \ 0 \ \ 0]^T$, $f_{3}= [0 \ \ 0 \ \ 0 \ \ 0.1 \ \ 0.1]^T$ (all in $\mathrm{p.u.}$). Besides, for the set of disruptive multivariate attacks, the parameters are set to $A=\mathbf{1}^\top$ and $b=1.5$ in $\mathcal{A}$. We refer to \cite[Section V]{Pan2020} for the specification of these values. 
Following Algorithm~\ref{alg:filterconstr_algorithm}, the program \eqref{opt:max-min-relax} is solved. The optimal value achieves maximum for $j=2$ that $\gamma^{\star}_{2} = 300$, which implies that a diagnosis filter of our approach 
could be obtained. Next, in the ``training phase'', a number of $m=100$ load disturbance instances are randomly generated. The program \eqref{opt:qua_mul_trans} in Corollary~\ref{cor:qua_opt_transient} is solved for the 
filter design. For the derived $\bar{N}$, the multivariate attack coordinate vector $\alpha$ is obtained by solving the inner minimization of the program \eqref{opt:robust_maximin}. In the ``test phase'', simulations in PF 
are conducted that several realizations of load disturbances have been implemented and the multivariate attacks with $\alpha$ have been launched. The performance of the two filters (the filter of our approach and the pure model-based filter from 
\cite{Pan2020}) is validated with two sets of outputs: one from the 
abstract linear model \eqref{eq:linearSP} 
(i.e., without output mismatch, $\varepsilon \equiv 0$) and another one from the PF 
simulations (i.e., with output mismatch, $\varepsilon \neq 0$). 
Figure~\ref{fig:resfig_puremodel_ma} 
shows the simulation result for both diagnosis filters. We can see that both filters succeed for the case $\varepsilon \equiv 0$, according to Figure~\ref{subfig52:ro} and~\ref{subfig62:rrob}. However, from Figure~\ref{subfig:ron210stl} and~\ref{subfig:rrobn210stl}, when there exists output (model) mismatch, our 
data-assisted model-based filter still works effectively, while the pure model-based filter in \cite{Pan2020} totally fails by triggering false alarms. 
Note that Figure~\ref{fig:resfig_puremodel_ma} 
depicts the ``energy'' of the residual signals for the last $10 \mathrm{s}$ under 10 load disturbance instances, namely $\| r \|_{\mathcal{L}_{2}}[\cdot]$. In Figure~\ref{subfig:rrobn210stl}, the threshold is set to $\tau^{\star} + 0.025$ where the square of $\tau^{\star}$ equals to the maximum value of $\bar{N}Q_{i}\bar{N}$ in the 100 training instances ($i \in \{1, \, \cdots, \, 100\}$), and the added value is to avoid possible false alarms according to \cite{MohajerinEsfahani2015}. Then a multivariate attack is said to be detected when the value of $\| r \|_{\mathcal{L}_{2}}[\cdot]$ is beyond this threshold; we see successful detections by our proposed method in Figure~\ref{subfig:rrobn210stl}, while the pure model-based one totally fails in Figure~\ref{subfig:ron210stl}. 
In the end, note that when looking into the steady-state behavior of the filter, 
it turns out that $\mu^\star = 0$, which indicates that the optimal multivariate attack in this case is a stealthy attack in the long-term horizon, with or without considering the output mismatch effect. However, one can still detect such attacks with a non-zero transient residual, as shown in Figure~\ref{subfig:rrobn210stl}. In conclusion, these simulation results 
validate the effectiveness of our proposed 
solution. 

\subsection{Discussions} \label{subsec:discuss}

\subsubsection{Applicability of the proposed solution}
It can be extended to other systems and anomaly scenarios 
if the anomalies are acting as exogenous inputs (see $f$ in \eqref{eq:linearSP}). 
Another instance can be the small-signal dynamics model of power systems under anomalies such as cyber attacks or bad data on the measurements (e.g., terminal voltage/current phasor of each generator). The mathematical description of the model can be linearized by considering a small perturbation over an 
operating point, resulting an abstract 
model 
\cite[Section III]{Taha2016}. 

\subsubsection{Comparison with other pure model-based or data-driven detectors}
As illustrated in simulation results, it can be expected that model mismatch would affect the diagnosis of other pure model-based detectors like observers when applied to the high-fidelity simulator. 
Regarding data-driven approaches, we admit that they may also be able to detect the anomalies 
of this article, while the data training stage may require a high computational cost, comparing with our proposed tractable convex optimization-based characterization.

\section{Conclusion} \label{sec:conclusion}

We have proposed a feasible solution to the problem that arises from applying scalable model-based anomaly detectors in practice: there always exists mismatch 
between 
the picked abstract model and the detailed one in the high-fidelity simulator (or the real electric power system). In our final tractable reformulation, the model-based information introduces feasible sets, and the simulation data forms the objective function to minimize the model mismatch effect, which could bridge the model-based and data-driven approaches.

\setcounter{section}{1}
\section*{Appendix A: System Modeling Details for Section II} \label{sec:appendix}

\subsection{AGC system and FDI attack in PowerFactory}

\begin{figure}[t!p]
	\centering
	\includegraphics[scale=0.089]{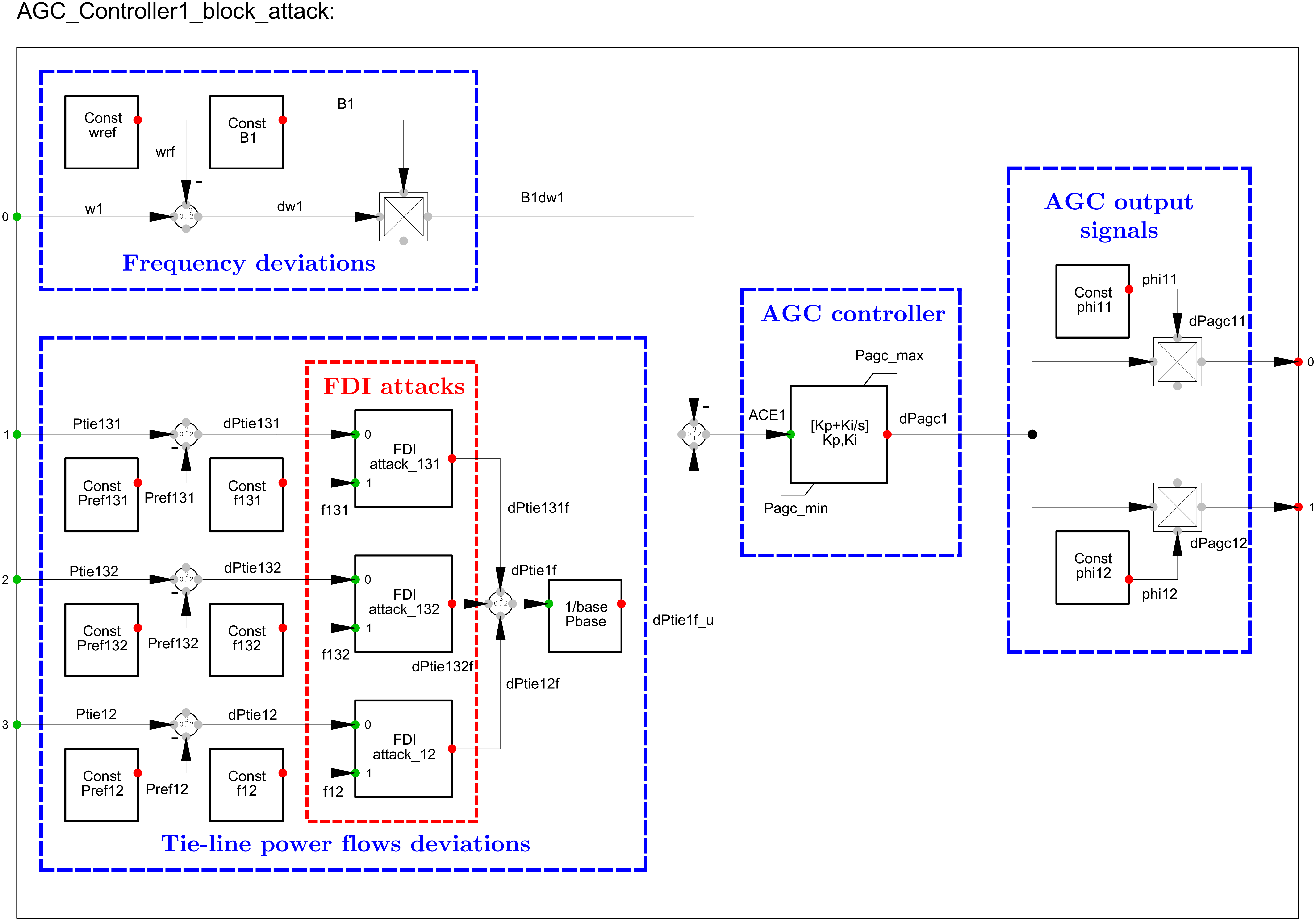}
	\caption{The block definition of AGC in high-fidelity simulator PF.}
	\label{fig:AGC_ctrl_pf}
\end{figure}

In a high-fidelity simulator like 
PF, a detailed simulation of power system dynamics can be carried out that all the details of generators (e.g., 
synchronous machine, exciter, voltage regulator, turbine-governor unit), electric network and also controllers such as 
AGC can be included. 
Figure~1 in the main body of the article depicts the three-area IEEE 39-bus system with AGC functions in the PF simulator. In the simulations, the dynamic generator model can consist of a synchronous machine, along with voltage regulator for the excitation system in the type of IEEE Type 1, and turbine-governor model in the type of IEEE Type G1 (steam turbine) or IEEE Type G3 (hydro turbine). 

The generators in red diagrams of Figure~1 are also 
participating AGC control of each area. The AGC control loop can be developed by PF's own modeling language - {\em DIgSILENT Simulation Language (DSL)}. 
For instance, Figure~\ref{fig:AGC_ctrl_pf} illustrates the \emph{block definition} of AGC in Area 1, which has several sub-blocks that collect frequencies and power exchanges between areas as the control inputs and perform AGC function to calculate signals for power settings of the participated generators in Area 1. Moreover, we build another block definition for the FDI attack on the frequencies and power exchanges which are parts of the ACE signal. 


We have built the high-fidelity simulation model in the simulator PF for the 39-bus system equipped with AGC functions. 
The {\em composite frame} of AGC 
builds the connections between the inputs and outputs of the AGC model elements. Then the AGC {\em block definitions} for all areas can be created. 
For Figure~\ref{fig:AGC_ctrl_pf} of block definition of AGC in Area 1, the four sub-blocks include 

\begin{itemize}
	\item {\em frequency deviations} block where the frequency deviations in $\mathrm{p.u.}$ multiplied by 
	a frequency bias factor are calculated;
	\item {\em tie-line power flows deviations} block which computes the tie-line power flow deviations (normalized in $\mathrm{p.u.}$) on the side of Area 1 for the power part of ACE; \item {\em AGC controller} block which performs the ACE calculation and the integral action to generate 
	the tuning signal for power settings of participated generators. To be noted, the saturation effects are considered that the limits of $P_{agc}^{min}$ and $P_{agc}^{max}$ are added for 
	this tuning signal. 
	\item {\em AGC output signals} block where the tuning signals for the participated generators in Area 1 for AGC are calculated based on each generator's	participating factor. %
\end{itemize}

The above block definitions are modeled using the {\em Standard Macros} of PowerFactory global {\em Library}. Moreover, in Figure~\ref{fig:AGC_ctrl_pf}, another block definition (in red diagram) corresponds to the FDI attack model for the study of this article, 
\begin{itemize}
	\item {\em FDI attacks} block where the FDI attack is implemented. Each block captures the feature of the stationary FDI attack, i.e., the attack occurs as a constant bias injection~($f[k]=f$) on measurements at a specific time step, 
	and it remains unchanged since then. This block can add an ``false'' injection 
	into the existing signal. One can specify the occurrence time and the attack values. This block definition is achieved by using the {\em digexfun} interface. With {\em digexfun}, we can define a specific DSL function (in C++) and create a dynamic link library {\em digexfun\_{*}.dll} that the PF can load.
\end{itemize}

\subsection{Parameters of the abstract linear model} \label{subsec:app_parameters}

\begin{figure}[t!p]
	\centering
	\includegraphics[scale=0.56]{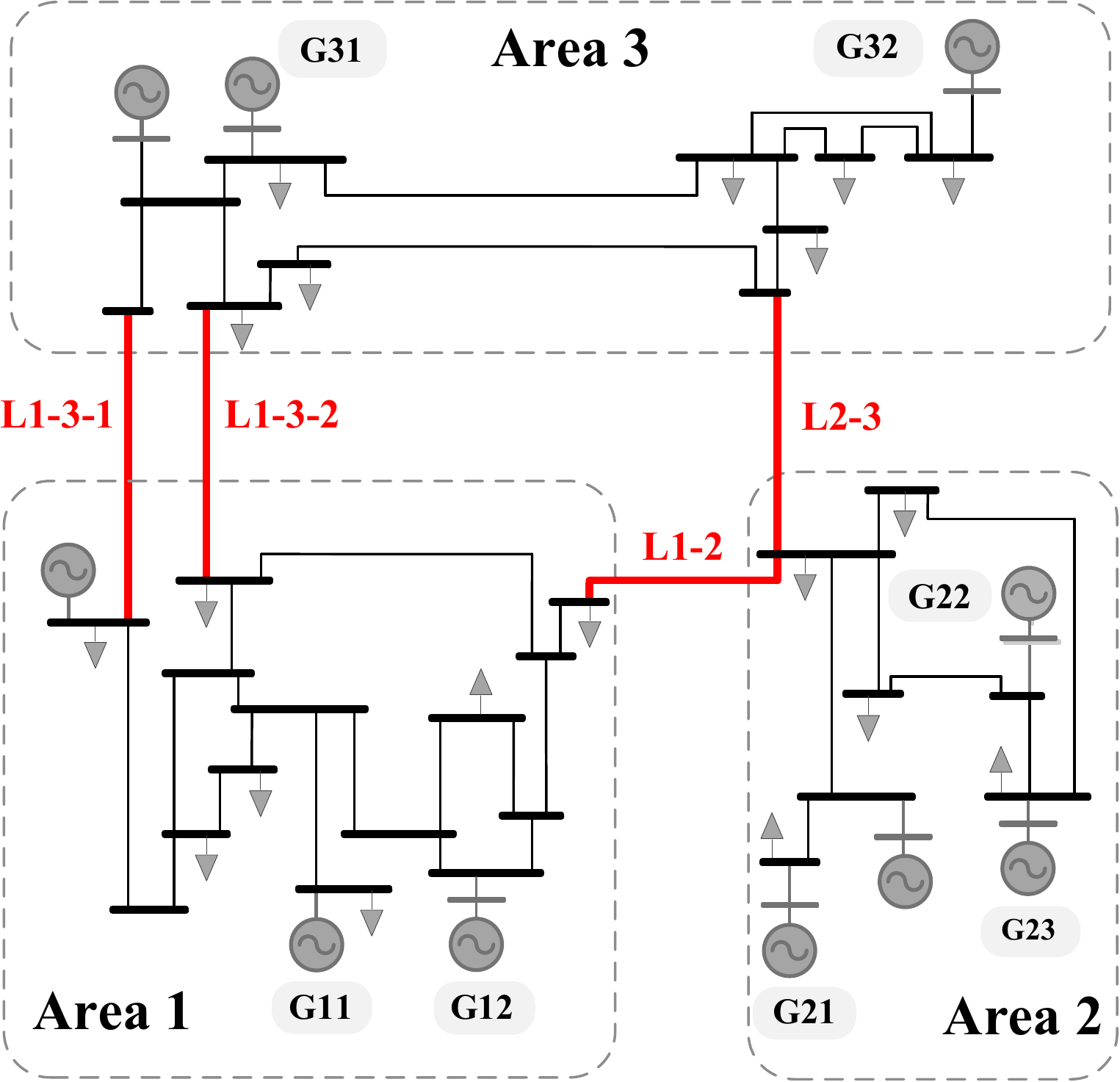}
	\caption{The diagram of the three-area system \cite{Pan2020}.} 
	\label{fig:39bus}
\end{figure}

The abstract linear model (2) in Section II is built through the following process. For AGC analysis, as mentioned in Section II, we are interested in collective performance
of all generators, and thus we can rely on certain levels of abstraction that simplify some elements of the initial complex model 
and utilize the possible decoupling between control loops. 
Then the mathematical description of a AGC system in Area $i$ can be presented in the linear formulation where each area of a power system is represented by a simplified model comprised of equivalent governors, turbines and generators, 
\begin{equation}\label{eq:AGC_model}
\left\{
\begin{aligned}
& \Delta \dot{\omega}_{i} = \frac{1}{2H_{i}}(\Delta {P}_{m_{i}}-\Delta P_{tie_{i}} -\Delta P_{d_{i}}-D_{i} \Delta \omega_{i}), \\
&  \Delta {P}_{m_{i}} = \sum_{g=1}^{G_{i}} \Delta {P}_{m_{ig}}, \ \Delta P_{tie_{i}} = \sum_{j \in \mathcal{M}_{i}} \Delta P_{tie_{ij}}, \\
& \Delta \dot{P}_{m_{ig}} = -\frac{1}{T_{ch_{ig}}}(\Delta P_{m_{ig}} + \frac{1}{S_{ig}} \Delta \omega_{i} - \phi_{ig}\Delta P_{agc_{i}}), \\
& \Delta \dot{P}_{tie_{ij}} =  T_{ij} (\Delta \omega_{i} - \Delta \omega_{j}), \\
& ACE_{i} = {\beta}_{i} \Delta \omega_{i} + \sum_{j \in \mathcal{M}_{i}} \Delta P_{tie_{ij}}, \\
& \Delta \dot{P}_{agc_{i}} =  - K_{I_{i}} ACE_{i}, %
\end{aligned}
\right.
\end{equation}
where $H_{i}$ is the equivalent inertia constant of Area $i$, 
$D_{i}$ is the damping coefficient, $\Delta {P}_{m_{i}}$ and $\Delta P_{tie_{i}}$ are the total generated power 
in Area $i$ and the total tie-line power exchanges from Area $i$, and $\Delta P_{d_{i}}$ denotes load disturbances. 
The term $G_{i}$ is the number of participated generators in Area $i$, and $\mathcal{M}_{i}$ is the set of areas that connect to Area $i$ in the multi-area system. Besides, $T_{ch_{ig}}$ is the governor-turbine's time constant, $S_{ig}$ is the droop coefficient, and $T_{ij}$ is the synchronizing parameter between Area $i$ and $j$. 

In the AGC loop, note that $\Delta P_{agc_{i}}$ in \eqref{eq:AGC_model} is the signal from the AGC controller for the participated generators to track the load changes, and $\phi_{i,g}$ is the participating factor, i.e., $\sum_{g=1}^{G_{i}} \phi_{i,g} =1$. After receiving the information of frequency and 
power exchange,  
the ACE signal is computed for an integral action where $\beta_{i}$ is the frequency bias and $K_{I_{i}}$ represents the integral gain. Based on the equations in \eqref{eq:AGC_model}, the abstract linear model of Area $i$ can be presented as
\begin{subequations}\label{eq:sp_areai}
	\begin{align}
	& \dot{\tilde{x}}_{i}(t) = {A}_{ii} \tilde{x}_{i}(t) + {B}_{id}{d}_{i}(t) + \sum_{j \in \mathcal{M}_{i}} {A}_{ij} \tilde{x}_{j}(t), \\
	& {y}_{i}(t) = {C}_{i}\tilde{x}_{i}(t),
	\end{align}
\end{subequations}
where $\tilde{x}_{i} := \big[\{\Delta P_{tie_{ij}}\}_{j \in \mathcal{M}_{i}}, \Delta \omega_{i}, \{\Delta {P}_{m_{ig}}\}_{1:G_{i}}, \Delta P_{agc_{i}}\big]^{\top}$ is the state vector  
that consists of tie-line power exchange, area frequency, generator output, and also AGC control signal of the close-loop system; ${d}_{i} := [\Delta P_{d_{i}}]^{\top}$ denotes load disturbances. Besides, in \eqref{eq:sp_areai}, ${A}_{ii}$ is the system matrix of Area $i$, ${A}_{ij}$ is a matrix whose only non-zero element is $-T_{ij}$ in row 1 or 2 and column 3,  ${B}_{id}$ is the matrix for load variations. We assume an output model with high redundancy that the measurements of each tie-line power ($\Delta P_{tie_{ij}}$) and the total tie-lines' power ($\Delta P_{tie_{i}}$), the frequency $\Delta \omega_{i}$, each generator output ($\Delta {P}_{m_{ig}}$) and the total generated power ($\Delta {P}_{m_{i}}$), and the AGC control signal $\Delta P_{agc_{i}}$ are all available. Then ${y}_{i}$ is the output of Area $i$ and ${C}_{i}$ is the output matrix with full column rank. As noted earlier, vulnerabilities within the 
communication channels for frequencies and power exchanges as parts of the ACE signal
may allow FDI attacks. For instance, if an attack can manipulate the output of 
one of tie-line power exchanges from Area $i$, say $f_{{tie}_i}$, then the ACE signal in \eqref{eq:AGC_model} would be corrupted into
\begin{equation}\label{eq:ace_f}
ACE_{i} = {\beta}_{i} \Delta \omega_{i} + (\sum_{j \in \mathcal{M}_{i}} \Delta P_{tie_{ij}} + f_{{tie}_i}), \\
\end{equation}
which implies that corruptions on the system output would affect the dynamics of controllers and consequently the involved physical system. 

Figure~\ref{fig:39bus} depicts the diagram of the three-area system used in the mathematical description of the abstract linear model. For the three-area AGC system, the vectors $\tilde{x}$, $d$ and $f$ in the continuous-time model (2) of Section II can be expressed as
\begin{gather}\label{eq:spx_39_x&d&f}
\tilde{x}= \left[\begin{matrix} \tilde{x}_{1}^{\top} & \tilde{x}_{2}^{\top} & \tilde{x}_{3}^{\top} \end{matrix}\right]^{\top}, \nonumber\\
d = \left[\begin{matrix} \Delta P_{d_{1}} & \Delta P_{d_{2}} & \Delta P_{d_{3}}\end{matrix}\right]^\top, \quad f= \left[\begin{matrix} f_{1}^\top & f_{2}^\top & f_{3}^\top \end{matrix}\right]^\top. \nonumber
\end{gather} 
In (2), ${A}_{c}$ is the closed-loop system matrix, ${B}_{c,d}$, ${B}_{c,f}$ are constant matrices, and these matrices can be described by
\begin{gather}\label{eq:spx_ABdBf}
{A}_{c} = \left[\begin{matrix} A_{11} & A_{12} & A_{13} \\ A_{21} & A_{22} & A_{23} \\  A_{31} & A_{32} & A_{33} \end{matrix}\right] \, , \nonumber \\
{B}_{c,d} = \mbox{diag} \left[\begin{matrix}B_{1d}, \ B_{2d}, \ B_{3d} \end{matrix}\right], \nonumber \\
{B}_{c,f} = \mbox{diag} \left[\begin{matrix}B_{1f}, \ B_{2f}, \ B_{3f} \end{matrix}\right] \,. \nonumber
\end{gather}
Similarly, for the output equation of the three-area system, the output vector $y$, the output matrix $C$ and the matrix $D_{f}$ which quantifies the places of vulnerable measurements become
\begin{gather}\label{eq:spx_39_y&c&df}
y= \left[\begin{matrix} y_{1}^\top & y_{2}^\top & y_{3}^\top \end{matrix}\right]^\top, \nonumber\\
C = \mbox{diag} \left[\begin{matrix}C_{1}, \ C_{2}, \ C_{3} \end{matrix}\right], \ D_{f} = \mbox{diag} \left[\begin{matrix}D_{1f}, \ D_{2f}, \ D_{3f} \end{matrix}\right]. \nonumber
\end{gather}

Next, we illustrate the formulations of the involved matrices for each area. Let us take the model description of Area 1 in the three-area system as an instance. More details can be found in \cite{Pan2020}. As seen from Figure~\ref{fig:39bus}, Area 1 has two generators (G 11, G 12) participating in the AGC operation and it is connected with Area 2 and Area 3 through the transmission lines called tie-lines (L1-3-1, L1-3-2, L1-2). Then we can have
\begin{equation} \label{eq:spx_B1d}
B_{1d} = \left[\begin{matrix} 0 & 0 & -\frac{1}{2H_{1}} & 0 & 0 & 0 \end{matrix}\right]^{\top}, \nonumber
\end{equation}
\begin{gather}\label{eq:spx_A11}
A_{11} = \small\left[\begin{matrix} 0 & 0 & T_{12} & 0 & 0 & 0 \\ 0 & 0 & T_{13} & 0 & 0 & 0 \\ -\frac{1}{2H_{1}} & -\frac{1}{2H_{1}} & -\frac{D_{1}}{2H_{1}} & \frac{1}{2H_{1}} & \frac{1}{2H_{1}} & 0 \\ 0 & 0 & -\frac{1}{T_{ch_{11}}S_{11}} & -\frac{1}{T_{ch_{11}}} & 0 & \frac{\phi_{11}}{T_{ch_{11}}} \\ 0 &  0 & -\frac{1}{T_{ch_{12}}S_{12}} & 0 & -\frac{1}{T_{ch_{12}}} & \frac{\phi_{12}}{T_{ch_{12}}} \\ -K_{I_{1}} & -K_{I_{1}} & -K_{I_{1}}B_{1} & 0 & 0 & 0 \end{matrix}\small\right]. \nonumber
\end{gather}

As we have assumed a measurement model with high redundancy, the matrix $C_{1}$ for Area 1 becomes
\begin{equation} \label{eq:spy_c1}
C_{1} = \left[\begin{matrix} 1 & 0 & 0 & 0 & 0 & 0 \\ 0 & 1 & 0 & 0 & 0 & 0 \\ 0 & 0 & 1 & 0 & 0 & 0 \\ 0 & 0 & 0 & 1 & 0 & 0 \\ 0 & 0 & 0 & 0 & 1 & 0 \\ 0 & 0 & 0 & 0 & 0 & 1 \\ 1 & 1 & 0 & 0 & 0 & 0 \\ 0 & 0 & 0 & 1 & 1 & 0\end{matrix}\right]^{\top}. \nonumber
\end{equation}

For the attack scenario, notably in practise, the tie-line power exchanges are usually more vulnerable to cyber attacks, comparing with frequency measurements (e.g., the anomalies in frequency can be easily detected by comparing the corrupted reading with the normal one). Thus to illustrate the formulations of matrices $D_{1f}$ and $B_{1f}$, let us assume that the vulnerable measurements in Area 1 to cyber attacks are the ones of power exchanges $\Delta P_{tie_{12}}$, $\Delta P_{tie_{13}}$ and $\Delta P_{tie_{1}}$. 
Then the parameters regarding such multivariate attacks are
\begin{equation} \label{eq:spy_f1}
f_{1} = \left[\begin{matrix} f_{tie_{12}} & f_{tie_{13}} & f_{tie_{1}}\end{matrix}\right]^{\top}, \nonumber
\end{equation}
\begin{equation} \label{eq:spy_D1f}
D_{1f} = \left[\begin{matrix} 1 & 0 & 0 & 0 & 0 & 0 & 0 & 0 \\ 0 & 1 & 0 & 0 & 0 & 0 & 0 & 0 \\ 0 & 0 & 0 & 0 & 0 & 0 & 1 & 0 \end{matrix}\right]^{\top}, \nonumber
\end{equation}
\begin{equation}\label{eq:spx_B1f}
B_{1f} = \left[\begin{matrix} 0 & 0 & 0 & 0 & 0 & -k_{1} \\ 0 & 0 & 0 & 0 & 0 & -k_{1} \\ 0 & 0 & 0 & 0 & 0 & 0 \end{matrix}\right]^{\top}. \nonumber
\end{equation}
\begin{figure}[t!p]
	\centering
	\begin{subfigure}[t]{0.46\textwidth}
		\centering
		\includegraphics[scale=0.44]{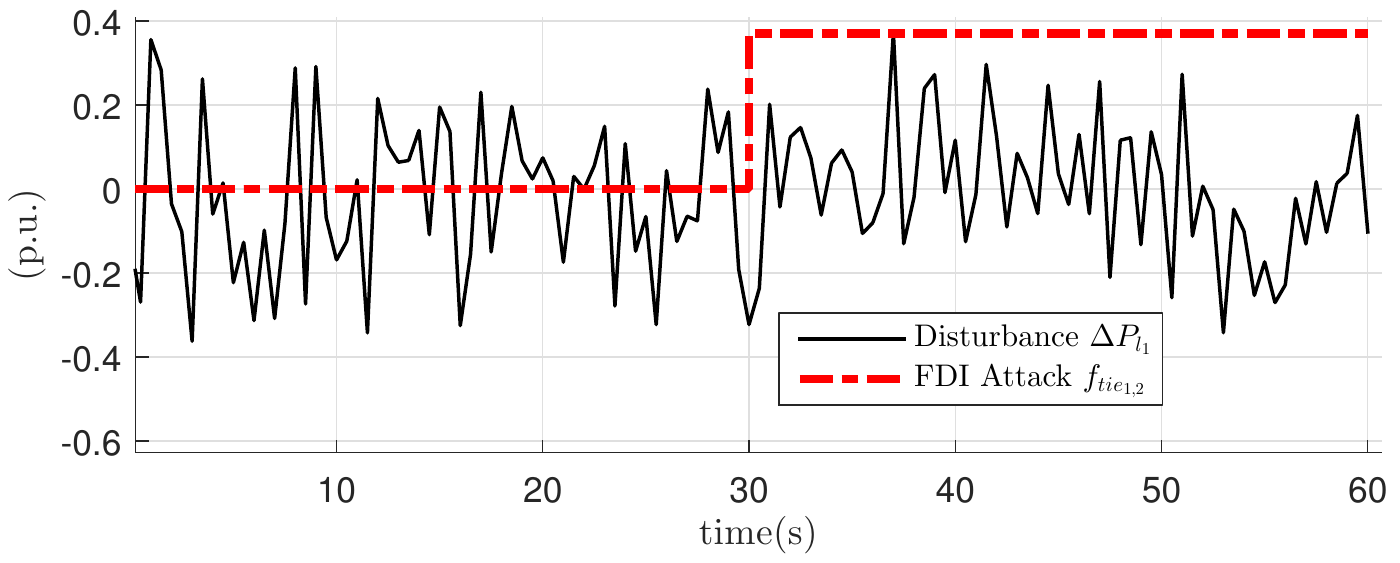}
		\caption{Load disturbance and univariate attack.}\label{subfig31:uasd}
	\end{subfigure}
	~
	\begin{subfigure}[t]{0.46\textwidth}
		\centering
		\includegraphics[scale=0.44]{fig4ad_a1stl_p08_2509.pdf}
		\caption{Load disturbance and univariate attack.}\label{subfig41:uasd}
	\end{subfigure}
	\\ 
	\begin{subfigure}[t]{0.46\textwidth}
		\centering
		\includegraphics[scale=0.44]{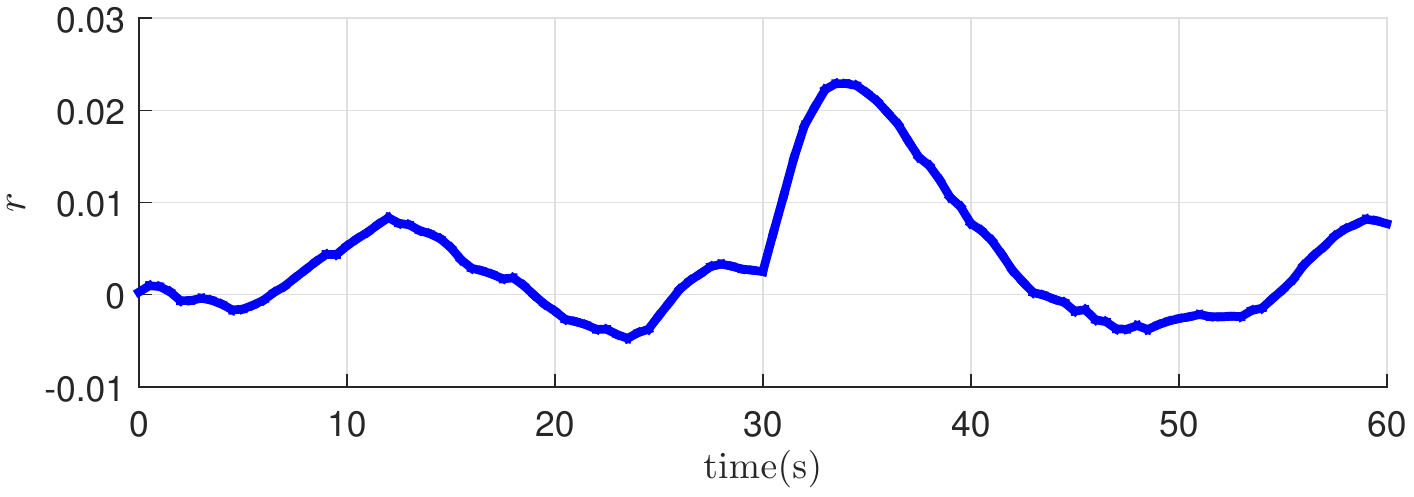}
		\caption{Residual of single instance.}\label{subfig32:ro}
	\end{subfigure}
	~
	\begin{subfigure}[t]{0.46\textwidth}
		\centering
		\includegraphics[scale=0.435]{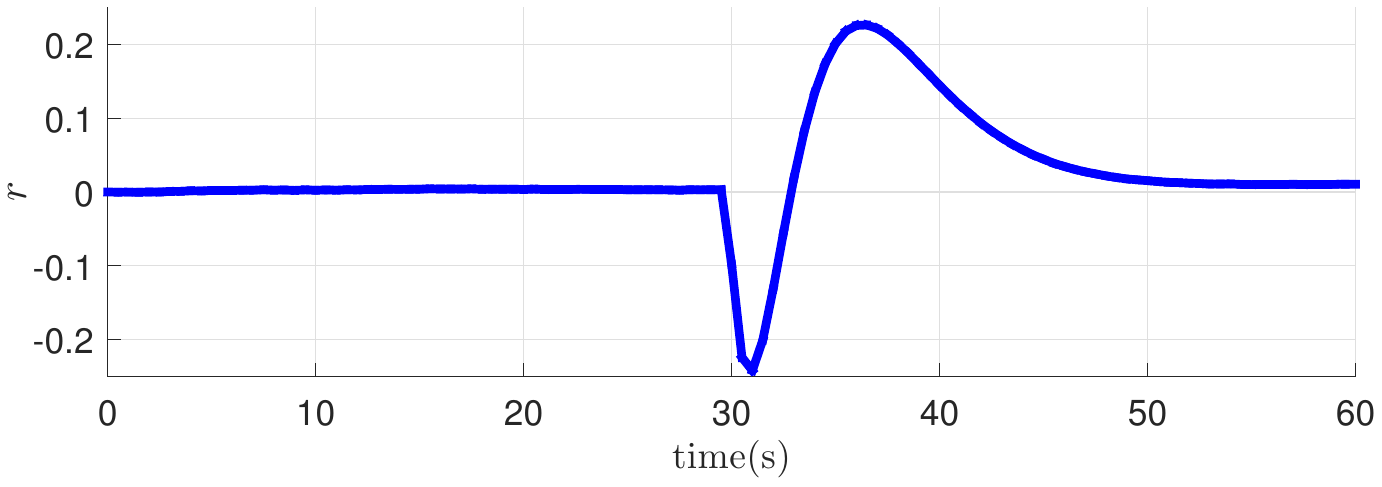}
		\caption{Residual of single instance.}\label{subfig33:rrob}
	\end{subfigure}	
	\\ 
	\begin{subfigure}[t]{0.46\textwidth}
		\centering
		\includegraphics[scale=0.44]{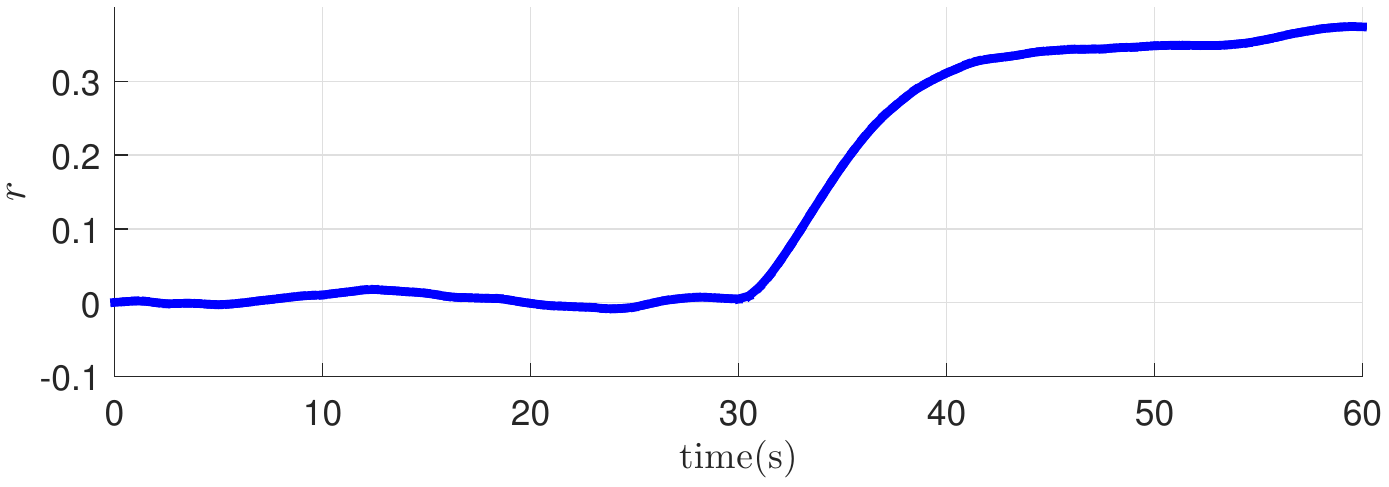}
		\caption{Residual of single instance with attack tracking capability in the steady-state value.}\label{subfig42:ross}
	\end{subfigure}
	~
	\begin{subfigure}[t]{0.46\textwidth}
		\centering
		\includegraphics[scale=0.435]{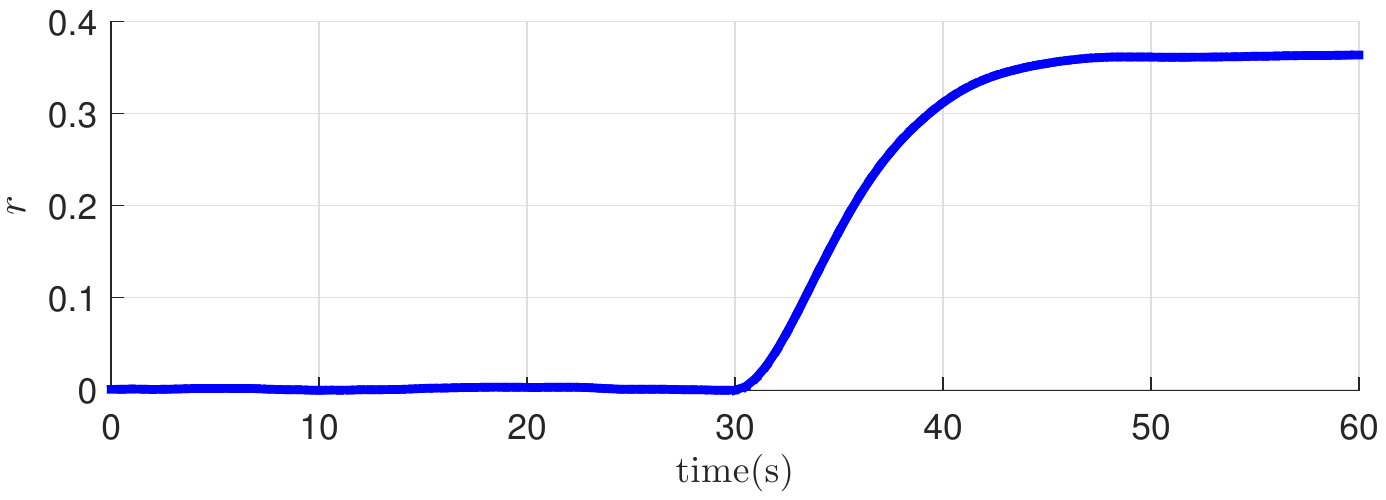}
		\caption{Residual of single instance with attack tracking capability in the steady-state value.}\label{subfig43:rrob}
	\end{subfigure}
	\\
	\begin{subfigure}[t]{0.46\textwidth}
		\centering
		\includegraphics[scale=0.44]{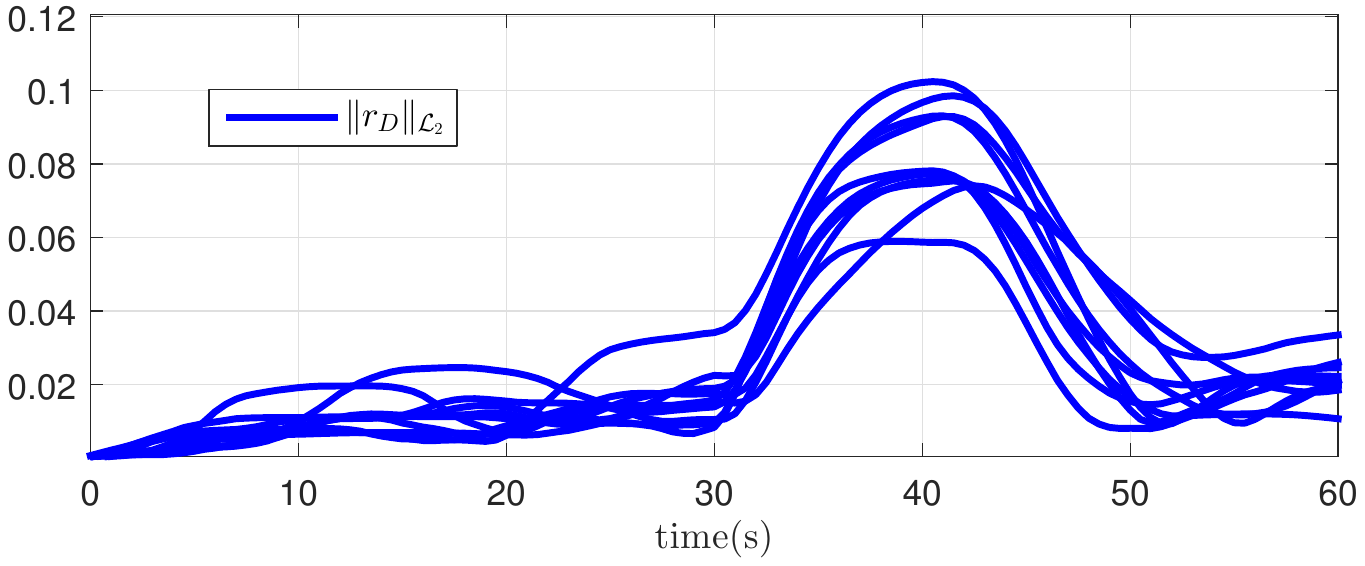}
		\caption{Energy of residual of multiple instances.}\label{subfig51:ron210}
	\end{subfigure}
	~
	\begin{subfigure}[t]{0.46\textwidth}
		\centering
		\includegraphics[scale=0.435]{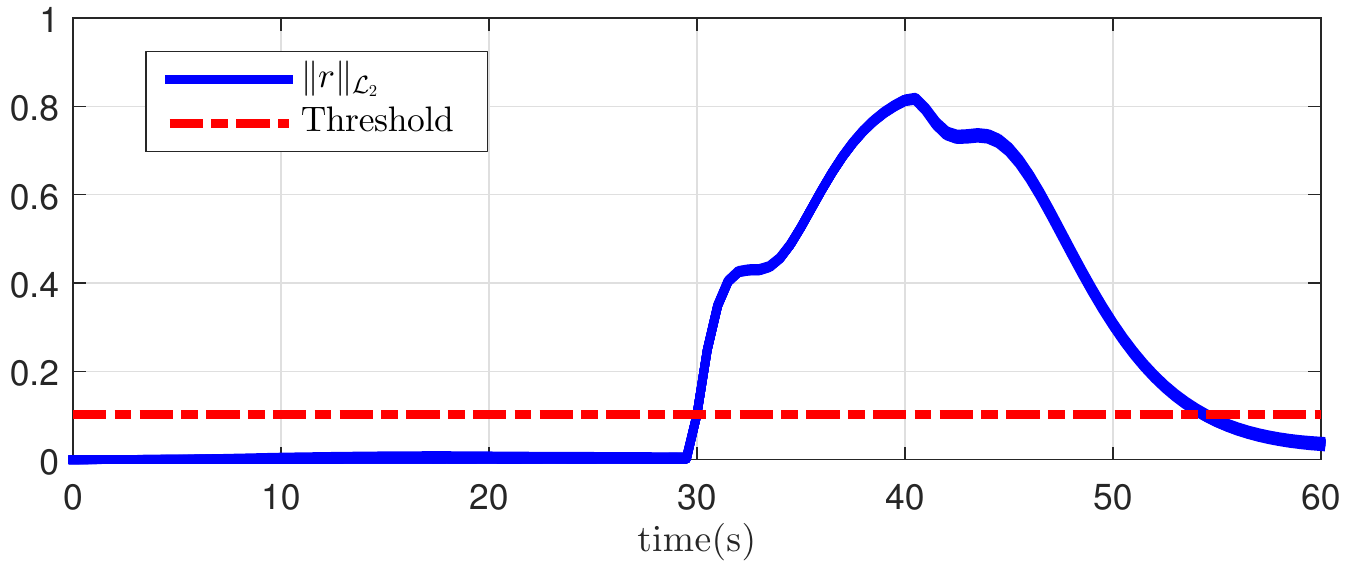}
		\caption{Energy of residual of multiple instances.}\label{subfig61:rrobn210}
	\end{subfigure}
	\caption{The left: pure model-based filter under univariate attack and model mismatch. It is derived by letting $Q_{i} = 0$ in (13), which can be transformed into finite LPs. The right: Data-assisted model-based filter under univariate attack and model mismatch. It is derived by (13) where $Q_{i}$ is from (12), and it is essentially a family of convex QPs.} %
	\label{fig:resfig_puremodel_sa}
\end{figure}
%

\section*{Appendix B: Simulation Results for the Univariate Attack Scenario} 

In Section IV-B, the simulation results of our data-assisted model-based filter and the pure model-based filter in the scenario of multivariate attacks have been illustrated in Figure 3 and Figure 4. Next in Figure~\ref{fig:resfig_puremodel_sa} 
we present the simulation results of our proposed filter and the pure model-based filter under the univariate attack. We can see that our data-assisted model-based filter has significant improvements in the regards of mitigating the effects from model mismatch on the residual; compare the figures on the right sides with the ones on the left. 
Besides, in its non-zero steady-state behavior, it can approximate the attack value (from Figure~\ref{subfig43:rrob}), while the pure model-based filter without a data-assisted perspective fails by triggering false alarms. Figure~\ref{subfig51:ron210} and~\ref{subfig61:rrobn210} provides the residual results under 10 different realizations of load disturbances in the ``testing phase''. 
Similar to Figure 3 and 4 in Section IV, Figure~\ref{subfig51:ron210} and~\ref{subfig61:rrobn210} depicts the ``energy'' of the residual signals for the last $10 \mathrm{s}$. Note that with the same rule adopted in 
Figure 4c, 
in Figure~\ref{subfig61:rrobn210} the threshold is set to $\tau^{\star} + 0.1$, where the square of $\tau^{\star}$ equals to the maximum value of $\bar{N}Q_{i}\bar{N}$ in the 100 training instances ($i \in \{1, \, \cdots, \, 100\}$, and the added value is to avoid possible false alarms. Then a univariate attack is said to be detected when the value of $\| r \|_{\mathcal{L}_{2}}[\cdot]$ is beyond this threshold. Thus as concluded in the main part of the article, our proposed diagnosis filter which can be applied to the high-fidelity simulator could generate residual ``alerts'' successfully for the occurrence of univariate attacks, and keep the effect from the model mismatch minimized. 

	\bibliographystyle{siam}
	\bibliography{literature_kp}

\begin{thebibliography}{10}

\bibitem{Ahmed2019}
{\sc S.~Ahmed, Y.~Lee, S.-H. Hyun, and I.~Koo}, {\em Mitigating the impacts of
  covert cyber attacks in smart grids via reconstruction of measurement data
  utilizing deep denoising autoencoders}, Energies, 12 (2019).

\bibitem{Ameli2018a}
{\sc A.~{Ameli}, A.~{Hooshyar}, E.~F. {El-Saadany}, and A.~M. {Youssef}}, {\em
  Attack detection and identification for automatic generation control
  systems}, IEEE Transactions on Power Systems, 33 (2018), pp.~4760--4774.

\bibitem{Ayad2018}
{\sc A.~{Ayad}, H.~E.~Z. {Farag}, A.~{Youssef}, and E.~F. {El-Saadany}}, {\em
  Detection of false data injection attacks in smart grids using recurrent
  neural networks}, in IEEE PES ISGT Conference, 2018, pp.~1--5.

\bibitem{bevrani2008}
{\sc H.~Bevrani}, {\em Robust Power System Frequency Control}, Power
  Electronics and Power Systems, Springer, 2008.

\bibitem{Chakrabortty2011}
{\sc A.~Chakrabortty and M.~D. Ili{\'c}}, {\em Control and optimization methods
  for electric smart grids}, vol.~3, Springer, 2011.

\bibitem{Ding2008}
{\sc S.~X. Ding}, {\em Model-based fault diagnosis techniques: design schemes,
  algorithms, and tools}, Springer Science \& Business Media, 2008.

\bibitem{Hao2015}
{\sc J.~{Hao}, R.~J. {Piechocki}, D.~{Kaleshi}, W.~H. {Chin}, and Z.~{Fan}},
  {\em Sparse malicious false data injection attacks and defense mechanisms in
  smart grids}, IEEE Transactions on Industrial Informatics, 11 (2015).

\bibitem{Khalaf2019}
{\sc M.~{Khalaf}, A.~{Youssef}, and E.~{El-Saadany}}, {\em Joint detection and
  mitigation of false data injection attacks in {AGC} systems}, IEEE
  Transactions on Smart Grid, 10 (2019), pp.~4985--4995.

\bibitem{kundur1994power}
{\sc P.~Kundur, N.~Balu, and M.~Lauby}, {\em Power System Stability and
  Control}, Discussion Paper Series, McGraw-Hill Education, 1994.

\bibitem{Kurt2019}
{\sc M.~N. {Kurt}, O.~{Ogundijo}, C.~{Li}, and X.~{Wang}}, {\em Online
  cyber-attack detection in smart grid: A reinforcement learning approach},
  IEEE Transactions on Smart Grid, 10 (2019), pp.~5174--5185.

\bibitem{Esfahani2016}
{\sc P.~{Mohajerin Esfahani} and J.~{Lygeros}}, {\em A tractable fault
  detection and isolation approach for nonlinear systems with probabilistic
  performance}, IEEE Trans. Autom. Control, 61 (2016), pp.~633--647.

\bibitem{MohajerinEsfahani2015}
{\sc P.~{Mohajerin Esfahani}, T.~{Sutter}, and J.~{Lygeros}}, {\em Performance
  bounds for the scenario approach and an extension to a class of non-convex
  programs}, IEEE Transactions on Automatic Control, 60 (2015).

\bibitem{Nyberg2006}
{\sc M.~Nyberg and E.~Frisk}, {\em Residual generation for fault diagnosis of
  systems described by linear differential-algebraic equations}, IEEE
  Transactions on Automatic Control, 51 (2006), pp.~1995--2000.

\bibitem{Ogata1995}
{\sc K.~Ogata}, {\em Discrete-time Control Systems (2Nd Ed.)}, Prentice-Hall,
  Inc., Upper Saddle River, NJ, USA, 1995.

\bibitem{Ozay2016}
{\sc M.~{Ozay}, I.~{Esnaola}, F.~T. {Yarman Vural}, S.~R. {Kulkarni}, and H.~V.
  {Poor}}, {\em Machine learning methods for attack detection in the smart
  grid}, IEEE Transactions on Neural Networks and Learning Systems, 27 (2016).

\bibitem{Palensky2017a}
{\sc P.~Palensky, A.~van~der Meer, C.~Lopez, A.~Joseph, and K.~Pan}, {\em
  Applied cosimulation of intelligent power systems: Implementing hybrid
  simulators for complex power systems}, IEEE Industrial Electronics Magazine,
  11 (2017), pp.~6--21.

\bibitem{Pan2020}
{\sc K.~{Pan}, P.~{Palensky}, and P.~{Mohajerin Esfahani}}, {\em From static to
  dynamic anomaly detection with application to power system cyber security},
  IEEE Transactions on Power Systems, 35 (2020), pp.~1584--1596.

\bibitem{Pan2017a}
{\sc K.~Pan, A.~Teixeira, C.~D. L{\'o}pez, and P.~Palensky}, {\em Co-simulation
  for cyber security analysis: Data attacks against energy management system},
  in IEEE SmartGridComm, 2017, pp.~253--258.

\bibitem{powerfactory39}
{\sc D.~PowerFactory}, {\em 39 bus new england system}, tech. rep., DIgSILENT
  GmbH, 2018.

\bibitem{Qi2018}
{\sc J.~Qi, A.~F. Taha, and J.~Wang}, {\em Comparing kalman filters and
  observers for power system dynamic state estimation with model uncertainty
  and malicious cyber attacks}, {IEEE} Access, 6 (2018), pp.~77155--77168.

\bibitem{Taha2016}
{\sc A.~F. Taha, J.~Qi, J.~Wang, and J.~H. Panchal}, {\em Risk mitigation for
  dynamic state estimation against cyber attacks and unknown inputs}, IEEE
  Transactions on Smart Grid, 9 (2016), pp.~886--899.

\bibitem{Ten2008}
{\sc C.~W. Ten, C.~C. Liu, and G.~Manimaran}, {\em Vulnerability assessment of
  cybersecurity for scada systems}, IEEE Trans. on Power Syst., 23 (2008),
  pp.~1836--1846.

\bibitem{Tidriri2016}
{\sc K.~Tidriri, N.~Chatti, S.~Verron, and T.~Tiplica}, {\em Bridging
  data-driven and model-based approaches for process fault diagnosis and health
  monitoring: A review of researches and future challenges}, Annual Reviews in
  Control,  (2016).

\bibitem{Wadhawan2018}
{\sc Y.~Wadhawan, A.~AlMajali, and C.~Neuman}, {\em A comprehensive analysis of
  smart grid systems against cyber-physical attacks}, Electronics,  (2018).

\bibitem{Wei2018}
{\sc T.~{Wei}, X.~{Chen}, X.~{Li}, and Q.~{Zhu}}, {\em Model-based and
  data-driven approaches for building automation and control}, in IEEE/ACM
  ICCAD, 2018, pp.~1--8.

\bibitem{Yu2018}
{\sc J.~Yu, Y.~Hou, and V.~Li}, {\em Online false data injection attack
  detection with wavelet transform and deep neural networks}, IEEE Trans. Ind.
  Informat.,  (2018).

\bibitem{Zhang2014}
{\sc J.~{Zhang} and A.~{Domínguez-García}}, {\em On the impact of
  communication delays on power system automatic generation control
  performance}, in NAPS, 2014.

\end{thebibliography}
\end{document}